\documentclass[12pt]{article}
\usepackage{amsmath, amssymb, amsthm, graphicx, tikz}
\usepackage[margin=1in]{geometry}
\usepackage{enumitem}
\usepackage{forest}
\usepackage{tikz}
\usetikzlibrary{trees}
\usepackage{setspace}
\usepackage{xcolor}
\usepackage{tcolorbox}
\usepackage{float}
\usetikzlibrary{backgrounds}   
\usetikzlibrary{positioning}    
\usetikzlibrary{fit}            
\usepackage[utf8]{inputenc}
\usepackage{graphicx}   
\usepackage{caption}    
\usepackage{subcaption} 
 \usepackage{geometry} 
\usepackage{amsthm}

\usepackage[T1]{fontenc}
\usepackage{subcaption}

\usepackage{tikz}
\usetikzlibrary{shapes.geometric, positioning, calc}

\usetikzlibrary{calc}
 \definecolor{myblue}{RGB}{0,0,255}
 \usepackage{pstricks}
\usepackage{pst-node}
\usepackage{pst-plot}
 
\numberwithin{equation}{section}
\numberwithin{figure}{section}
\numberwithin{table}{section}

\newtheorem{theorem}{Theorem}[section]

\newtheorem{proposition}[theorem]{Proposition}

\theoremstyle{definition}
\newtheorem{definition}[theorem]{Definition}

\theoremstyle{remark}

\theoremstyle{remark}

\begin{document}
 
\title{Pollyanna and Polynomially $\chi$-Bounded Graph Classes}
\author{
N. Rahimi$^1$\thanks{ORCID: 0009-0004-9571-669X}\ , D.A. Mojdeh$^2$\thanks{Corresponding author, ORCID: 0000-0001-9373-3390}\\
$^{1,2}$Department of Mathematics, Faculty of Mathematical Sciences,\\ University of Mazandaran, Babolsar, Iran\\
 $^{1}${\tt narjesrahimi1365@gmail.com}\\
 $^{2}${\tt damojdeh@umz.ac.ir}  
}
\date{}
\maketitle

\begin{abstract}

A hereditary graph class is called polynomially $\chi$-bounded if there exists a polynomial function $f$ such that $\chi(G) \le f(\omega(G))$ for every induced subgraph $G$. A class $\mathcal{C}$ is called Pollyanna if, for every $\chi$-bounded class $\mathcal{F}$, the class $\mathcal{C} \cap \mathcal{F}$ is polynomially $\chi$-bounded. 

In the paper Chudnovsky et al., \emph{Reuniting $\chi$-boundedness with polynomial $\chi$-boundedness} (J.\ Combin.\ Theory Ser.\ B 176 (2026), 30--73), the authors posed twelve problems and one conjecture concerning the Pollyanna framework. In this work, we investigate several of these problems by studying the chromatic number of hereditary graph classes defined by forbidden induced subgraphs.

We prove three new strong Pollyanna results. In particular, for every $t \ge 2$, every $\{\text{diamond}, \mathrm{hammer}(t)^+\}$-free graph is $(t)$-strongly Pollyanna. We also show that graph classes obtained by forbidding suitable combinations of bowties, dumbbells are $(2t-2)$-strongly Pollyanna.

We show that the class of $\{(2,2)\text{-bowtie},\, P_5,\, (3,3)\text{-dumbbell}\}$-free graphs is polynomially $\chi$-bounded. We also prove polynomial $\chi$-boundedness for diamond-free graphs in which every edge lies in at least two triangles, under additional forbidden configurations.

\end{abstract}

\section{Introduction}

All graphs considered in this paper are finite, simple, and undirected. For a positive integer $k$, a $k$-coloring of a graph $G$ is a function $c : V(G) \to \{1,\dots,k\}$ such that $c(u) \neq c(v)$ for every edge $uv \in E(G)$. A graph is called $k$-colorable if it admits a $k$-coloring. The chromatic number $\chi(G)$ is defined as the least integer $k$ for which $G$ is $k$-colorable. A clique in $G$ is a set of vertices that are pairwise adjacent. The size of a largest clique in $G$ is denoted by $\omega(G)$. Evidently, $\chi(H) \ge \omega(H)$ holds for every induced subgraph $H$ of $G$.

A class of graphs $\mathcal{G}$ is called \emph{hereditary} if every induced subgraph of any graph in $\mathcal{G}$ also belongs to $\mathcal{G}$. One important and well-studied hereditary graph class is the family of $H$-free graphs, i.e., graphs that have no induced subgraph isomorphic to a fixed graph $H$. Given a family of graphs $\mathcal{H}$, we say that a graph $G$ is \emph{$\mathcal{H}$-free} if $G$ is $H$-free for every $H \in \mathcal{H}$.

For certain graph families it is possible to bound the chromatic number purely in terms of the clique number. Following Gy\'arf\'as~\cite{gyarfas87}, such families are said to be $\chi$-bounded. More precisely, a family $\mathcal{G}$ is $\chi$-bounded if there exists a function $f : \mathbb{N} \to \mathbb{N}$, called a $\chi$-bounding function, such that for every graph $G \in \mathcal{G}$ and every induced subgraph $H$ of $G$, we have $\chi(H) \le f\bigl(\omega(H)\bigr)$.

Polynomial $\chi$-boundedness was a long-standing open problem originally proposed by Esperet \cite{Esperet2009}. This conjecture was recently disproven in a strong sense by Briański, Davies, and Walczak \cite{Brian2022}. Esperet's Conjecture suggests that every hereditary $\chi$-bounded class of graphs admits a $\chi$-bounding function that is a polynomial \cite{Esperet2009, Brian2022}.

While the conjecture initially seemed ambitious, it remained open for many years as several attempts to find a counterexample failed. Interestingly, research trends often supported the conjecture; many graph classes initially shown to be $\chi$-bounded were later proven to be polynomially $\chi$-bounded. Notable examples include classes of bounded broom-free graphs \cite{Kierstead1994, Liu2023}, and double-star-free graphs \cite{Kierstead1994, Scott2022}.

Furthermore, Esperet's Conjecture would have had significant implications for the Erdős–Hajnal property, implying that every hereditary $\chi$-bounded class satisfies it. A class $\mathcal{F}$ has this property if there exists $\epsilon > 0$ such that every $n$-vertex graph in $\mathcal{F}$ contains a clique or an independent set of size at least $n^\epsilon$ \cite{Erdos1989}. For a comprehensive history, see the survey by Chudnovsky \cite{Chudnovsky2014}. It remains an open question whether classes forbidding an induced path satisfy the Erdős–Hajnal property, despite significant recent progress \cite{Nguyen2023b, p5Scott2023, Nguyen2023}. Recently, Nguyen, Scott, and Seymour proved that classes with bounded VC-dimension satisfy this property \cite{Nguyen2024}.

In a graph $G$, the \emph{neighborhood} of a vertex $x$ is the set $N_G(x) = \{ y \in V(G) \setminus \{x\} \mid xy \in E(G)\}$.
Given two disjoint subsets $X, Y \subseteq V(G)$, we say that $X$ is \emph{complete} to $Y$ (or $[X, Y]$ is complete) if every vertex in $X$ is adjacent to every vertex in $Y$; 
and $X$ is \emph{anticomplete} to $Y$ if $[X, Y] = \emptyset$ (i.e., there are no edges between $X$ and $Y$).

For a positive integer~$i$, let  
$
N^{i}(X) := \{\, u \in V(G) \setminus X : \min\{ d(u,v) : v \in X \} = i \,\},
$
where $d(u,v)$ is the distance between $u$ and $v$ in~$G$.  
Then $N^{1}(X) = N(X)$ is the neighborhood of~$X$.  
Moreover, let
$
N^{\ge i}(X) := \bigcup_{j=i}^{\infty} N^{j}(X).
$
Let $H$ be any subgraph of~$G$.  
For simplicity, we sometimes write $N^{i}(H)$ for $N^{i}(V(H))$ and $N^{\ge i}(H)$ for $N^{\ge i}(V(H))$.

For $u,v \in V(G)$, we simply write $u \sim v$ if $uv \in E(G)$,  
and write $u \nsim v$ if $uv \notin E(G)$.
For any integer $\ell$, we let $P_\ell$ denote the path on $\ell$ vertices.
Gy\'arf\'as~\cite{gyarfas87} showed that the class of $P_k$-free graphs is $\chi$-bounded.

The \emph{diamond} is the graph obtained from the complete graph $K_4$ by deleting one edge.

In \cite{Cameron2021}, it is shown that every $(P_6, \text{diamond})$-free graph $G$ satisfies $\chi(G) \le \omega(G) + 3$. The kite is the graph obtained from a $P_4$ by adding a vertex and making it adjacent to all vertices in the $P_4$ except one vertex with degree 1. 
It is shown in \cite{Huang2024} that every $(P_5, \text{kite})$-free graph $G$ with $\omega(G) \ge 5$ satisfies $\chi(G) \le 2\omega(G) + 3$.

A flag is the graph obtained from a $K_4$ by attaching a pendent vertex. Char and Karthick \cite{Char2023} proved that for any $(P_5, \text{flag})$-free graph $G$ with $\omega(G) \ge 4$, $\chi(G) \le \linebreak \max\{8, 2\omega(G) - 3\}$.

In \cite{Chudnovsky2020}, the authors provide an explicit structural description of $(P_5, \text{gem})$-free graphs, and prove that every such graph $G$ satisfies $\chi(G) \le \left\lceil \frac{5\omega(G)}{4} \right\rceil$. Moreover, this bound is shown to be best possible. Here, a \emph{gem} is the graph consisting of an induced four-vertex path plus a vertex that is adjacent to all vertices of that path.

Chudnovsky et al. (2025) introduced the notion of a \emph{Pollyanna} class. Motivated by this,
the Pollyanna framework considers an internal analogue of the conjecture: a
class $\mathcal{C}$ is called \emph{Pollyanna} if $\mathcal{C} \cap \mathcal{F}$
is polynomially $\chi$-bounded for every $\chi$-bounded class $\mathcal{F}$ \cite{Chudnovsky2026reuniting}.
Equivalently, Esperet’s conjecture holds inside $\mathcal{C}$. Every
polynomially $\chi$-bounded class is Pollyanna, so Pollyanna classes strictly
generalize polynomially $\chi$-bounded ones.

 For an integer
$n$, let $\chi^{(n)}(G)$ denote the maximum chromatic number of an induced
subgraph $H$ of $G$ with $\omega(H)\le n$. A class $\mathcal{F}$ of graphs is
said to be \emph{$n$-good} if it is hereditary and there exists a constant $m$
such that every $G\in\mathcal{F}$ satisfies $\chi^{(n)}(G)\le m$. Note that
$n$-goodness is strictly weaker than $\chi$-boundedness.

A class $\mathcal{C}$ of graphs is called \emph{$n$-strongly Pollyanna} if,
for every $n$-good class $\mathcal{F}$ of graphs, the class $\mathcal{C}\cap
\mathcal{F}$ is polynomially $\chi$-bounded.

By considering the subclass $\mathcal{F}$ of $\mathcal{C}$ consisting of graphs $G$
with $\chi^{(n)}(G)\le c$, we can observe that $\mathcal{C}$ is $n$-strongly Pollyanna if and only if
there exists a function $f$ such that
\[
\chi(G) \le f\bigl(\chi^{(n)}(G),\, \omega(G)\bigr)
\]
for every graph $G \in \mathcal{C}$, where $f(c,\cdot)$ is a polynomial for
every constant $c$ \cite{Chudnovsky2026reuniting}.

\begin{definition}
The following graphs are defined as follows.
\begin{enumerate}
    \item Let $t$ and $k$ be positive integers. A \emph{$(t,k)$-pineapple} is a graph obtained
    by attaching $k$ pendant edges to a vertex of a complete graph $K_t$.

    \item Let $s$ and $t$ be positive integers. A graph is called an \emph{$(s,t)$-bowtie} if
    it can be obtained from the disjoint union of $K_s$ and $K_t$ by adding a new vertex that
    is adjacent to all other vertices.

    \item Let $t$ be a positive integer. The \emph{$t$-lollipop} is the graph obtained from the
    disjoint union of the complete graph $K_t$ and the path graph $P_2$ by adding an edge
    joining a vertex of $K_t$ to an end-vertex of $P_2$.
    
    \item Let $s,t \ge 1$ be integers. A graph is called an \emph{$(s,t)$-dumbbell} if it can be obtained from the disjoint union of the complete graphs $K_s$ and $K_t$ by adding a single edge between a vertex of $K_s$ and a vertex of $K_t$.

\end{enumerate}
\end{definition}

\begin{figure}[H]
\centering

\begin{subfigure}{0.4\textwidth}
\centering
\begin{tikzpicture}[scale=1.1,
    main/.style={circle, draw, fill=black, inner sep=1.5pt},
    every edge/.style={thick}
]

\node[main] (v1) at (0,0) {};

\node[main] (v2) at (-0.6,-0.4) {};
\node[main] (v3) at (-0.6,0.4) {};
\draw (v1)--(v2); \draw (v2)--(v3); \draw (v3)--(v1);

\node[main] (v4) at (0.6,-0.4) {};
\node[main] (v5) at (0.6,0.4) {};
\draw (v1)--(v4); \draw (v4)--(v5); \draw (v5)--(v1);

\end{tikzpicture}
\caption{$(2,2)$-bowtie}
\label{fig:bowtie22_horizontal}
\end{subfigure}
\hfill
\begin{subfigure}{0.45\textwidth}
\centering
\begin{tikzpicture}[scale=1,
    main/.style={circle, draw, fill=black, inner sep=1.5pt},
    every edge/.style={thick}
]

\node[main] (a1) at (-1,-0.5) {};
\node[main] (a2) at (-0.5,-1) {};
\node[main] (a3) at (0,-0.5) {};
\node[main] (a4) at (-0.5,0) {};

\draw (a1)--(a2); \draw (a1)--(a3); \draw (a1)--(a4);
\draw (a2)--(a3); \draw (a2)--(a4);
\draw (a3)--(a4);

\node[main] (b1) at (1,-0.5) {};
\node[main] (b2) at (1.5,0) {};
\node[main] (b3) at (1.5,-1) {};

\draw (b1)--(b2); \draw (b1)--(b3); \draw (b2)--(b3);

\draw (a3)--(b1);

\end{tikzpicture}
\caption{$(3,4)$-dumbbell}
\label{fig:dumbbell}
\end{subfigure}

\caption{}
\label{fig:bowtie-dumbbell22}
\end{figure}

\begin{theorem}[\cite{Chudnovsky2026reuniting}]\label{thm:1.2}
Let $m, k, t$ be positive integers. The following statements hold:
\begin{enumerate}
    \item The class of $mK_t$-free graphs is $(t-1)$-strongly Pollyanna.
    \item The class of $(t, k)$-pineapple-free graphs is $(2t-4)$-strongly Pollyanna.
    \item The class of $t$-lollipop-free graphs is $(3t-6)$-strongly Pollyanna.
    \item The class of bowtie-free graphs is $3$-strongly Pollyanna.
    \item The class of bull-free graphs is $2$-strongly Pollyanna.
\end{enumerate}
\end{theorem}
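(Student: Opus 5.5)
We use the reformulation stated just before the theorem. For each class $\mathcal{C}$ and the given $n$, it suffices to find a function $f$ with $\chi(G)\le f(\chi^{(n)}(G),\omega(G))$ for all $G\in\mathcal{C}$, where $f(c,\cdot)$ is a polynomial for each fixed $c$. All five parts will follow one scheme: induction on $\omega=\omega(G)$, with $c=\chi^{(n)}(G)$ held fixed.

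\textbf{The base case.} If $\omega\le n$, then $\chi(G)\le c$ directly.

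\textbf{The inductive step.} For $\omega>n$, fix a clique $K$ of size $n+1$, or a slightly larger clique depending on the forbidden graph. Partition $V(G)$ into three parts:
\begin{itemize}
\item $K$ itself;
\item vertices with a non-neighbour in $K$;
\item common neighbours of $K$.
\end{itemize}
The common neighbours of a clique of size $s$ induce a subgraph of clique number at most $\omega-s$, so induction bounds that part. For a vertex $v\in K$, the non-neighbours of $v$ are handled by a standard degree-type recursion. Each step of this recursion must lose at most a polynomial factor, and the recursion depth must be bounded by a polynomial in $\omega$. Together these give a polynomial bound.

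\textbf{Part 1 ($mK_t$-free).} Take a maximum collection of pairwise anticomplete disjoint $K_t$'s. It has fewer than $m$ members, so its union $S$ has fewer than $mt$ vertices. Every $K_t$ in $G$ then meets $N[S]$. Hence $G\setminus N[S]$ is $K_t$-free, and its chromatic number is at most $\chi^{(t-1)}(G)$. The set $N(S)$ is a union of fewer than $mt$ neighbourhoods $N(v)$. Each $N(v)$ has clique number at most $\omega-1$ and lies in the class, so induction on $\omega$ applies. This gives
\[
\chi(G)\le c+mt\,\chi_{\omega-1},
\]
which is exponential. So I would instead run the induction on the pair $(m,\omega)$. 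The neighbourhood of any $K_t$ must be treated so that deleting it drops $m$; this yields a bound of the form $\omega^{O(m)}$, polynomial for fixed $m,t$.

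\textbf{Parts 2--5.} For pineapples, lollipops and bowties, the forbidden graph is a clique with attachments. Fix a clique $Q$ of size $n+1$, where $n$ is the stated parameter: $2t-4$, $3t-6$, $3$, or $2$ for bull-free. The counting argument then shows that every vertex outside $N[Q]$, and every vertex of $N(Q)$ with few neighbours in $Q$, lies in a piece of clique number at most $n$. For example, in the pineapple case a vertex with a neighbour in $Q$ and a private pendant structure would build a $K_t$ plus $k$ pendant edges. I would then show that each $N_Q$-class, meaning the vertices with a given trace on $Q$, either has clique number at most $n$ or induces a smaller instance. The second case is controlled by induction on $\omega$, with multiplicity at most $2^{n+1}$.

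\textbf{Main obstacle.} The hard step is keeping the recursion polynomial: branching into boundedly many subproblems each of clique number $\omega-1$ is exponential. To avoid this, the recursion must drop $\omega$ by a constant \emph{fraction}, or branch only into subproblems of clique number at most $n$. I would try to do this with a Gyárfás-path/levelling argument: use BFS levels $N^i(v)$ and show that each level is covered by polynomially many sets of clique number at most $\omega/2$. The forbidden configuration limits how cliques in consecutive levels interact, which makes this covering possible. This gives $\chi\le c\cdot\omega^{O(1)}$.
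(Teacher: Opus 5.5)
The paper does not prove this theorem; it quotes it from Chudnovsky et al. Your proposal therefore has to stand on its own, and it has a genuine gap. You correctly flag that branching recursion on $\omega$ is exponential, but you never supply a mechanism that avoids it.

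\textbf{Your accounting of the recursion.} Your general scheme is miscounted. A polynomial loss per step over polynomially many steps is not polynomial, and neither is it over $\log\omega$ steps. You need either bounded recursion depth or no recursion in $\omega$ at all.

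\textbf{Part 1.} Your repair (``induction on $(m,\omega)$'') fails for exactly this reason. Deleting $N[Q]$ for a single $K_t$, call it $Q$, does lower $m$. But colouring $N(Q)$ still costs $t\,f(m,\omega-1)$, so $f(m,\omega)\le f(m-1,\omega)+t\,f(m,\omega-1)+t$, which is still exponential in $\omega$.

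The missing idea is to take a \emph{maximum} clique $K$ and recurse on $m$ only:
\begin{itemize}
\item Every vertex outside $K$ has a non-neighbour in $K$.
\item If its set $M$ of non-neighbours in $K$ satisfies $|M|\le t-1$, it lies in $A_M$. Since $A_M\cup(K\setminus M)$ contains no clique larger than $\omega$, we get $\omega(A_M)\le t-1$, hence $\chi(A_M)\le c$.
\item Otherwise it is anticomplete to some $t$-subset $N\subseteq K$. The vertices anticomplete to $N$ induce an $(m-1)K_t$-free graph.
\end{itemize}
This gives
\[
f_m(\omega)\le \omega+c\sum_{i=1}^{t-1}\binom{\omega}{i}+\binom{\omega}{t}f_{m-1}(\omega),\qquad f_1\equiv c,
\]
which is $O(c\,\omega^{tm})$, with no induction on $\omega$.

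\textbf{Parts 2--5: the structural claim is false.} Take the disjoint union of two copies of $K_\omega$. It is $P_3$-free, so it lies in all these classes (for $t\ge 2$). Yet the vertices outside $N[Q]$ induce a $K_\omega$, not a piece of clique number at most $n$.

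\textbf{Parts 2--5: the fallback and the cure both fail.} Your fallback, induction on $\omega$ with multiplicity $2^{n+1}$, is the exponential recursion again. Your proposed cure is unproved, and it is also too weak. Even if every BFS level were covered by $p(\omega)$ sets of clique number at most $\omega/2$, the recursion $f(\omega)\le 2p(\omega)f(\omega/2)$ gives only $\omega^{\Theta(\log\omega)}$ for polynomial $p$ of positive degree.

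\textbf{The missing tool for Parts 2--5.} What you need is the peeling induction on $|V(G)|$, which this paper imitates in the proof of Theorem~\ref{thm2}. Suppose every $G$ in the class with $\omega(G)>n$ has a nonempty set $X$ containing a maximum clique (there, $X=K\cup T$) such that:
\begin{itemize}
\item $\chi(G[X])\le p(\omega)$, and
\item every vertex of $X$ has at most $q(\omega)$ neighbours outside $X$.
\end{itemize}
Colour $G-X$ by induction. Then recolour each $x\in X$ inside a block of $q+1$ colours indexed by its colour in $G[X]$, avoiding the colours of its outside neighbours. This gives $\chi(G)\le\max\{c,\,p(\omega)(q(\omega)+1)\}$, with no recursion on $\omega$ at all.

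The real content of each part is then a case analysis driven by the specific forbidden graph. One must show that the relevant classes of $N(K)$ have clique number at most $n$ once $\omega>n$. One must also show that vertices of $X$ have few outside neighbours. Typically those neighbours, taken anticomplete to a suitable clique through the vertex, cannot contain $k$ pairwise non-adjacent vertices without completing the forbidden graph. So there are fewer than $R(\omega+1,k)$ of them, which is polynomial by Proposition~\ref{prop:Erdos1935}.

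None of this analysis appears in your proposal. Bulls, which are not a clique with pendant structure at a single vertex, receive no specific argument at all.
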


For positive integers $s$ and $t$, let $R(s, t)$ be the minimum positive integer $N$ such that
every graph on $N$ vertices contains a clique of size $s$ or an independent set of size $t$.

\begin{proposition}[\cite{Erdos1935}]\label{prop:Erdos1935}
For positive integers $s$ and $t$, we have
$
R(s, t) \le \binom{s + t - 2}{t - 1}.
$
\end{proposition}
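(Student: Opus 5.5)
We prove the Erd\H{o}s--Szekeres bound $R(s,t)\le \binom{s+t-2}{t-1}$ by induction on $s+t$, using the standard recursion
\[
R(s,t)\le R(s-1,t)+R(s,t-1)\qquad (s,t\ge 2),
\]
together with Pascal's rule.

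\textbf{Base cases.} If $s=1$ or $t=1$, then $R(s,t)=1$, since a single vertex is both a clique of size $1$ and an independent set of size $1$. The right-hand side is also $1$: for $s=1$ we get $\binom{t-1}{t-1}=1$, and for $t=1$ we get $\binom{s-1}{0}=1$.

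\textbf{Recursion.} Let $s,t\ge 2$, put $N=R(s-1,t)+R(s,t-1)$, and let $G$ be a graph on $N$ vertices. Fix a vertex $v$. The remaining $N-1$ vertices split into $N(v)$ and its complement $M=V(G)\setminus(N(v)\cup\{v\})$. By pigeonhole, either $|N(v)|\ge R(s-1,t)$ or $|M|\ge R(s,t-1)$.

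In the first case, $G[N(v)]$ contains a clique of size $s-1$ or an independent set of size $t$. A clique of size $s-1$ together with $v$ gives a clique of size $s$, since $v$ is complete to $N(v)$.

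In the second case, $G[M]$ contains a clique of size $s$ or an independent set of size $t-1$. An independent set of size $t-1$ together with $v$ gives an independent set of size $t$, since $v$ is anticomplete to $M$.

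Either way $G$ has a clique of size $s$ or an independent set of size $t$. The property is monotone in the number of vertices (restrict to an induced subgraph on exactly $N$ vertices), so $R(s,t)\le N$.

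\textbf{Induction.} Applying the inductive hypothesis to both terms of the recursion,
\[
R(s,t)\le \binom{s+t-3}{t-1}+\binom{s+t-3}{t-2}=\binom{s+t-2}{t-1}
\]
by Pascal's identity. This completes the induction.

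The only step needing care is the recursion inequality: one must check that $v$ extends the structure found on the correct side. It does, because $v$ is complete to $N(v)$ and anticomplete to $M$. Everything else is bookkeeping.
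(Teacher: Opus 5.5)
Your proof is correct: induction on $s+t$ via $R(s,t)\le R(s-1,t)+R(s,t-1)$ and Pascal's identity is exactly the classical Erd\H{o}s--Szekeres argument. The paper gives no proof of its own for this proposition and simply cites Erd\H{o}s and Szekeres, so there is no competing route to compare.
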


Because of Proposition~\ref{prop:Erdos1935}, if $t$ is a fixed constant, then $R(s, t)$ is bounded from above by a degree-$(t-1)$ polynomial in $s$.

\section{Main results}
   
The structure is inspired by the theorem for pineapple-free graph classes \cite{Chudnovsky2026reuniting}.

\subsection{Structure}
Let $K \subseteq V(G)$ be a clique of size $\omega(G)$. Let $t$ be an integer. For $\emptyset \neq M \subseteq K$, $|M|<t$, define
\[
A_M = \{v \in V(G)\setminus K : v \text{ complete to } K\setminus M, \text{ anti-complete to } M\}.
\]

For a subset $N$ of $K$ with $|N| = t$ and a vertex $v$ of $K \setminus N$, let $A'_{N,v}$ be the set of vertices in
$N(v) \setminus K$ that are anti-complete to $N$.
Note that, by definition, every vertex $u \in N(K)$ with at least $t$ non-neighbors in $K$ is
in $A'_{N,v}$ for some $N \subseteq K \setminus N(u)$ and $v \in K$ with $|N| = t$.
Let $T$ be the union of all $A'_{N,v}$ for every choice of $N \subseteq K$ and $v \in K \setminus N$
such that $|N| = t$.

Then
\[
S = \bigcup_{1\le |M|<t} A_M, \quad T = \bigcup_{|N|=t} A'_{N,v}.
\]

It follows from these definitions that $S$ is the set of all vertices in $N(K)$ with fewer than $t$ non-neighbors in $K$, and $T$ is the set of all vertices in $N(K)$ with at least $t$ non-neighbors in $K$. Hence,
$N(K) = S \cup T$.

Let
\[
S' = \{ v \in V(G) \setminus (K \cup S \cup T) : v \text{ has a neighbor in } S \}
\]
\[
T' = \{ v \in V(G) \setminus (K \cup S \cup T) : v \text{ has a neighbor in } T \}
\]

{
\setlength{\itemsep}{0pt}
\setlength{\parskip}{0pt}
\setlength{\topsep}{3pt}

\begin{definition}
Let $t \ge 2$ and $k \ge 1$ be integers. Consider a complete graph $K_t$ on $t$ vertices and a star $S_k$ with $k$ vetices.  
A \emph{$(k,t)$-lollipop} is obtained by connecting the center of the star to every vertex of $K_t$ (Figure~\ref{fig:lollipop}).
  
\end{definition}
\vspace{-0.3\baselineskip}

\begin{definition}
Let $l$ be an integer.
A graph $F(3, l)$ consists of a central vertex adjacent to all vertices of $l$ disjoint triangles (i.e., $l$ triangles that are pairwise vertex-disjoint). In other words, it is obtained by taking $l$ copies of $K_3$ and connecting every vertex of each triangle to a single common central vertex (Figure~\ref{fig:F33}).

\end{definition}

\vspace{-0.3\baselineskip}

\begin{definition}
Let $t$ be an integer.
The graph $\mathrm{hammer}(t)^+$ is formed by a path of length~3, say $(u_0, u_1, u_2, u_3)$, where the endpoint $u_3$ is adjacent to all vertices of a complete graph $K_t$. In other words, it is the union of a $P_4$ and a $K_t$, joined by making one endpoint of the path adjacent to every vertex of the clique (Figure~\ref{fig:hamt+}).

\end{definition}

\vspace{-0.3\baselineskip}
\begin{definition}
Let $t \ge 1$ be an integer. Consider the complete graph $K_{t}$ on $t$ vertices.  
The graph $F^{1}_{t}$ is obtained by adding two new vertices, which are non-adjacent to each other, each of which is adjacent to all vertices of $K_{t}$ (Figure 2.2).
\end{definition}

\vspace{-0.3\baselineskip}

\begin{definition}
Let $t \ge 1$ be an integer. Consider the complete graph $K_t$ on $t$ vertices.  
The graph $F^{2}_{t}$ is obtained by adding three new vertices, which are pairwise non-adjacent, each of which is adjacent to all vertices of $K_t$ (Figure 2.2).
\end{definition}

}

\begin{figure}[H]
\centering

\begin{subfigure}{0.32\textwidth}
\centering
\begin{tikzpicture}[scale=1,
    main/.style={circle, draw, fill=black, inner sep=1.5pt},
    leaf/.style={circle, draw, fill=black, inner sep=1pt},
    every edge/.style={thick}
]

\node[main,green] (a1) at (-0.8,-1.5) {};
\node[main,green] (a2) at (0,-2) {};
\node[main,green] (a3) at (0.8,-1.5) {};
\node[main] (w) at (0,-0.7) {};
\draw (a1)--(a2); \draw (a2)--(a3); \draw (a3)--(a1);
\draw (w)--(a1); \draw (w)--(a2); \draw (w)--(a3);

\node[main] (o) at (0,0.5) {};
\draw (w)--(o);

\node[leaf,red] (l4) at (-1.1,1.1) {};
\node[leaf,red] (l1) at (-0.5,1.4) {};
\node[leaf,red] (l2) at (0.2,1.4) {};
\node[leaf,red] (l3) at (0.9,1.2) {};
\draw (o)--(l1); \draw (o)--(l2); \draw (o)--(l3);\draw (o)--(l4);
\end{tikzpicture}
\caption{$(4,3)$-lollipop}
\label{fig:lollipop}
\end{subfigure}
\hfill
\begin{subfigure}{0.32\textwidth}
\centering
\begin{tikzpicture}[
    scale=0.8,
    vertex/.style={circle, fill=black, inner sep=1.6pt},
    center/.style={circle, fill=black, inner sep=2.2pt},
    every edge/.style={thick}
]
\node[center] (c) at (0,0) {};
\def\r{2} \def\s{0.7}
\path (90:\r) coordinate (A); \path (210:\r) coordinate (B); \path (330:\r) coordinate (D);

\node[vertex] (a1) at ($(A)+(90:\s)$) {};
\node[vertex] (a2) at ($(A)+(210:\s)$) {};
\node[vertex] (a3) at ($(A)+(330:\s)$) {};
\draw[green] (a1)--(a2)--(a3)--(a1);

\node[vertex] (b1) at ($(B)+(90:\s)$) {};
\node[vertex] (b2) at ($(B)+(210:\s)$) {};
\node[vertex] (b3) at ($(B)+(330:\s)$) {};
\draw[green] (b1)--(b2)--(b3)--(b1);

\node[vertex] (d1) at ($(D)+(90:\s)$) {};
\node[vertex] (d2) at ($(D)+(210:\s)$) {};
\node[vertex] (d3) at ($(D)+(330:\s)$) {};
\draw[green] (d1)--(d2)--(d3)--(d1);

\foreach \v in {a1,a2,a3,b1,b2,b3,d1,d2,d3} {\draw (\v)--(c);}
\end{tikzpicture}
\caption{$F(3,3)$}
\label{fig:F33}
\end{subfigure}
\hfill
\begin{subfigure}{0.32\textwidth}
\centering
\begin{tikzpicture}[
    scale=0.6,
    vertex/.style={circle, fill=black, inner sep=1.6pt},
    every edge/.style={thick}
]
\node[vertex] (v1) at (0,3) {};
\node[vertex] (v2) at (0,2) {};
\node[vertex] (v3) at (0,1) {};
\node[vertex] (v4) at (0,0) {};
\draw (v1)--(v2)--(v3)--(v4);
\node[vertex,green] (a1) at (-1,-1.5) {};
\node[vertex,green] (a2) at (1,-1.5) {};
\node[vertex,green] (a3) at (0,-2.4) {};
\draw (a1)--(a2)--(a3)--(a1);
\foreach \v in {a1,a2,a3} {\draw (v4)--(\v);}
\end{tikzpicture}
\caption{$\mathrm{hammer}(3)^+$}
\label{fig:hamt+}
\end{subfigure}

\caption{}
\label{fig:special-graphs}
\end{figure}

\begin{figure}[H]
    \centering

    \begin{minipage}[c]{0.45\textwidth}
        \centering
        \begin{tikzpicture}[
            scale=1.1,
            vertex/.style={circle, fill=black, inner sep=1.2pt},
            f_node/.style={circle, fill=black, inner sep=1.8pt},
            set/.style={draw, thick, ellipse, minimum width=2.5cm, minimum height=1cm}
        ]

        \node[set] (K) at (0, 0) {\small $t$};

        \node[f_node] (f1) at (0, 1.5) {};
        \node[f_node] (f2) at (0, -1.5) {};

        \fill[gray!50, opacity=0.7] (f1) -- (K.west) -- (K.east) -- cycle;
        \fill[gray!50, opacity=0.7] (f2) -- (K.west) -- (K.east) -- cycle;

        \node[set] at (0, 0) {\small $t$};
        \node[f_node] at (0, 1.5) {};
        \node[f_node] at (0, -1.5) {};

        \end{tikzpicture}

        \caption*{$F^1_t$}
    \end{minipage}
    \hfill
    \begin{minipage}[c]{0.45\textwidth}
        \centering
        \begin{tikzpicture}[
            scale=1.1,
            vertex/.style={circle, fill=black, inner sep=1.2pt},
            f_node/.style={circle, fill=black, inner sep=1.8pt},
            set/.style={draw, thick, ellipse, minimum width=2.5cm, minimum height=1cm}
        ]

        \node[set] (K) at (0,0) {\small $t$};

        \node[f_node] (f1) at (-1.6,1.6) {};
        \node[f_node] (f2) at (1.6,1.6) {};
        \node[f_node] (f3) at (0,-1.7) {};

        \fill[gray!50, opacity=0.7] (f1) -- (K.west) -- (K.east) -- cycle;
        \fill[gray!50, opacity=0.7] (f2) -- (K.west) -- (K.east) -- cycle;
        \fill[gray!50, opacity=0.7] (f3) -- (K.west) -- (K.east) -- cycle;

        \node[set] at (0,0) {\small $t$};
        \node[f_node] at (-1.6,1.6) {};
        \node[f_node] at (1.6,1.6) {};
        \node[f_node] at (0,-1.7) {};

        \end{tikzpicture}

        \caption*{$F^2_t$}
    \end{minipage}

    \caption{$F^1_t$ and $F^2_t$.}
\end{figure}

For integers $s,t \ge 2$ with $s \le t$, and for a positive integer $C$, if $|K| \ge 2t - 1$, then we have the following properties:

\textit{P-property.}
A graph $G$ is said to have the P-property if for every induced subgraph 
$G' \subseteq G$ with $\omega(G') \le t$, it holds that $\chi(G') \le C$.

\medskip

\textit{Property 1.}

If $G$ is $F^1_t$-free, then $S = \varnothing$. In particular, when $t = 2$ (so that the forbidden graph is the diamond), we have $S = \varnothing$ even for graphs with $\omega(G) \ge 3$ (Figure~\ref{fig:P1P2}a).

\medskip

\textit{Property 2.}

If $G$ is $F^2_t$-free, then either $|A_M| = \{0,1\}$ or if two vertices lie in $A_M$, they must be adjacent. 
Therefore,
$|A_M| \le \omega(G)$ and consequently
$
|S| \le \sum_{i=1}^{t-1} \omega(G) \binom{\omega(G)}{i}
$ (Figure~\ref{fig:P1P2}b).

\begin{figure}[H]
\centering

\begin{subfigure}{0.48\textwidth}
\centering
\scalebox{0.7}{
\begin{tikzpicture}[scale=0.8,
    big_set/.style={draw=black, thick, ellipse, minimum width=6cm, minimum height=3cm},
    inner_set/.style={draw=black, thick, ellipse, minimum width=3.5cm, minimum height=1.8cm},
    vertex/.style={circle, fill=black, inner sep=2pt}
]
\node[big_set, label=left:$S$] (S) at (0, 4) {};
\node[inner_set, label=left:$A_M$, color=red!80] (AM) at (0, 4) {};
\node[vertex] (V1) at (-1, 4.2) {};
\node[big_set, label=left:$K$] (K) at (0, -1) {};
\coordinate (curve_top) at (-1, 0.8);
\coordinate (curve_bottom) at (-1, -2.8);
\draw[dashed, thick, color=red!80] 
    (-1, 0.8) .. controls (-1.5, 0) and (-1.5, -2) .. (-1, -2.8);
\node[vertex] (V2) at (-2.3, -0.8) {};
\node[vertex] (W1) at (1.2, 0.2) {};
\node[vertex] (W2) at (1.2, -1.8) {};
\node[font=\small, color=red!80] at (-2.3, -2) {$M$};
\fill[blue!20, opacity=0.6] (V1.center) -- (W1.center) -- (W2.center) -- cycle;
\fill[blue!20, opacity=0.6] (V2.center) -- (W1.center) -- (W2.center) -- cycle;
\draw (V1) -- (W1);
\draw (V1) -- (W2);
\draw (V2) -- (W1);
\draw (V2) -- (W2);
\draw (W1) -- (W2);
\end{tikzpicture}
}
\caption{$P1$}
\label{fig:P1}
\end{subfigure}
\hfill
\begin{subfigure}{0.48\textwidth}
\centering
\scalebox{0.7}{
\begin{tikzpicture}[scale=0.8,
    big_set/.style={draw=black, thick, ellipse, minimum width=6cm, minimum height=3cm},
    inner_set/.style={draw=black, thick, ellipse, minimum width=3.5cm, minimum height=1.8cm},
    vertex/.style={circle, fill=black, inner sep=2pt}
]
\node[big_set, label=left:$S$] (S) at (0, 4) {};
\node[inner_set, label=left:$A_M$, color=red!80] (AM) at (0, 4) {};
\node[vertex] (V1) at (-1.3, 4.2) {};
\node[vertex] (V3) at (-0.5, 4.2) {};
\node[big_set, label=left:$K$] (K) at (0, -1) {};
\coordinate (curve_top) at (-1, 0.5);
\coordinate (curve_bottom) at (-1, -2.5);
\draw[dashed, thick, color=red!80] 
    (-1, 0.8) .. controls (-1.5, 0) and (-1.5, -2) .. (-1, -2.8);
\node[vertex] (V2) at (-2.3, -0.8) {};
\node[vertex] (W1) at (1.6, -0.2) {};
\node[vertex] (W2) at (1.2, -1.8) {};
\node[font=\small, color=red!80] at (-2.3, -1.8) {$M$};
\fill[blue!20, opacity=0.6] (V1.center) -- (W1.center) -- (W2.center) -- cycle;
\fill[blue!20, opacity=0.6] (V2.center) -- (W1.center) -- (W2.center) -- cycle;
\fill[green!20, opacity=0.6] (V3.center) -- (W1.center) -- (W2.center) -- cycle;
\draw (V1) -- (W1);
\draw (V1) -- (W2);
\draw (V2) -- (W1);
\draw (V2) -- (W2);
\draw (V3) -- (W1);
\draw (V3) -- (W2);
\draw (W1) -- (W2);
\end{tikzpicture}
}
\caption{$P2$}
\label{fig:P2}
\end{subfigure}

\caption{}
\label{fig:P1P2}
\end{figure}

\textit{Property 3.}

Let $k$ be an integer. If the graph is $(k,t)$-lollipop-free, then For each vertex $v_0\in T$, we have
\[
|N_{T'}(v_0)| < R(\omega-1, k).
\]

\medskip
\begin{figure}[H]
\centering
\scalebox{0.8}{
\begin{tikzpicture}[
    big_set/.style={draw=black, thick, ellipse, minimum width=6cm, minimum height=2.5cm},
    inner_set/.style={draw=black, thick, ellipse, minimum width=3cm, minimum height=1.2cm},
    vertical_ellipse/.style={draw=black, thick, ellipse, minimum width=0.8cm, minimum height=1.5cm, fill=gray!40},
    vertex/.style={circle, fill=black, inner sep=1.8pt},
    vertex_red/.style={circle, fill=red!80, inner sep=1.8pt},
    green_edge/.style={draw=green!60!black, thick},
    green_cone/.style={fill=green!40, opacity=0.6},
    font=\small
]

\node[big_set, label=left:$T'$] (Tprime) at (0, 5) {};

\node[vertex] (V1) at (-1.5, 4.8) {};
\node[vertex] (V2) at (0, 4.8) {};
\node[vertex] (V3) at (1.5, 4.8) {};

\node[big_set, label=left:$T$] (T) at (0, 2) {};

\node[inner_set] (ANv) at (0, 2) {};

\node[left=0cm, font=\small] at (ANv.west) {$A'_{N,v}$};

\coordinate (ANv_left) at (ANv.west);
\coordinate (ANv_right) at (ANv.east);

\node[big_set, label=left:$K$] (K) at (0, -1) {};

\coordinate (divider_mid) at ($(K.west)!0.33!(K.east)$);
\coordinate (divider_top) at (divider_mid |- K.north);
\coordinate (divider_bottom) at (divider_mid |- K.south);

\draw[dashed, thick, red!80] 
    (divider_top) .. controls ($(divider_top) + (-0.3, 0)$) and ($(divider_bottom) + (-0.3, 0)$) .. 
    (divider_bottom);

\node[font=\small, color=red!80] at ($(divider_top)!0.5!(divider_bottom) + (1.5, -1)$) {$N$};

\node[vertex, label=left:$v$] (v) at ($(K.west)!0.5!(divider_mid) + (0, 0.2)$) {};

\coordinate (t_ellipse_center) at (1.2, -1);

\node[vertical_ellipse, fill=green!25] (t_ellipse) at (t_ellipse_center) {};
\node[font=\small] at (t_ellipse.center) {$t$};

\coordinate (t_ellipse_top) at (t_ellipse.north);
\coordinate (t_ellipse_bottom) at (t_ellipse.south);

\fill[gray!50, opacity=0.5] (v.center) -- (ANv_left) -- (ANv_right) -- cycle;

\node[inner_set, draw=black, thick, fill=white] at (0, 2) {};

\node[vertex_red] (D) at (-0.5, 2.2) {};

\fill[green!40, opacity=0.6] (v.center) -- (t_ellipse_top) -- (t_ellipse_bottom) -- cycle;

\node[vertical_ellipse, draw=black, thick, fill=green!25] at (t_ellipse_center) {};
\node[font=\small] at (t_ellipse.center) {$t$};

\draw[green_edge] (V1) -- (D);
\draw[green_edge] (V2) -- (D);
\draw[green_edge] (V3) -- (D);

\draw[green_edge, line width=1.5pt] (D) -- (v);

\draw[green_edge] (v) -- (t_ellipse_top);
\draw[green_edge] (v) -- (t_ellipse_bottom);
\node[vertex_red, label=left:$v_0$] (D) at (-0.5, 2.2) {};
\end{tikzpicture}
}
\caption{$P3$ (for \emph{$(3,t)$-lollipop})}
\label{fig:P3}
\end{figure}
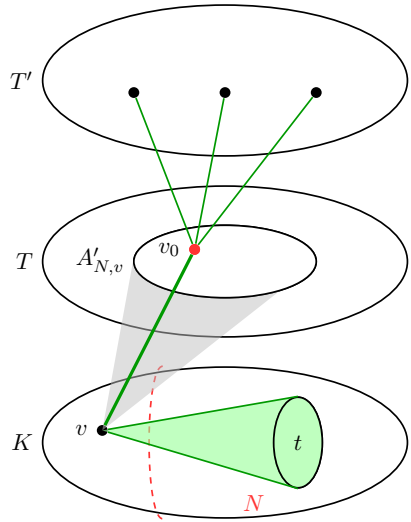

\textit{Property 4.}

If the graph is $\{hammer(t)^+, \text{diamond}\}$-free, then $\chi(T') \le \omega(G)$.

\medskip

Since the graph $G$ is $\text{hammer}(t)^+$-free, for every $v_0 \in T$ we have
\[
N^{\ge 2}(v_0) \setminus (K \cup T) = \emptyset.
\]
Indeed, if there exists a vertex $O \in N^{\ge 2}(v_0) \setminus (K \cup T)$, then an induced
$\text{hammer}(t)^+$ would appear, which is a contradiction (Figure~\ref{fig:both-P4}a).

Therefore, every vertex in $T$ must be adjacent to all vertices of $T'$.
If $T'$ is a complete subgraph, or consists of components each of which is a complete subgraph,
then the claim holds.

Hence, assume to the contrary that $T'$ contains a component $T''$ which is not complete.
Then $T''$ contains a clique $K''$ of size at least two.
Let $k_1, k_2 \in K''$ (in Figure~\ref{fig:both-P4}b, we assume $|K''| = 3$).

Since $T''$ is assumed not to be complete, there must exist a vertex $v_1 \in T''$ that is at distance $2$ from some vertex of $K''$, say $k_2$.
By the above argument, every vertex $v_0 \in T$ is adjacent to all vertices of $T''$.
Consequently, the vertices $v_1, k_1, k_2, v_0$ induce a diamond, unless $v_1$ is adjacent
to $k_2$, which contradicts the assumption so $|T''| \leq \omega(G)$.

\begin{figure}[H]
\centering

\begin{minipage}{0.48\textwidth}
\centering
\scalebox{0.8}{
\begin{tikzpicture}[
    big_set/.style={draw=black, thick, ellipse, minimum width=6cm, minimum height=2.5cm},
    inner_set/.style={draw=black, thick, ellipse, minimum width=3cm, minimum height=1.2cm},
    vertical_ellipse/.style={draw=black, thick, ellipse, minimum width=0.8cm, minimum height=1.5cm, fill=green!25},
    vertex/.style={circle, fill=black, inner sep=1.8pt},
    vertex_red/.style={circle, fill=red!80, inner sep=1.8pt},
    green_edge/.style={draw=green!60!black, thick},
    dashed_line/.style={dashed, thick, black!60},
    font=\small
]

\coordinate (dashed_line_left) at (-4, 6.5);
\coordinate (dashed_line_right) at (4, 6.5);
\draw[dashed_line] (dashed_line_left) -- (dashed_line_right);
\node[vertex, label=above:$O$] (O) at (-0.7, 7.2) {};
\node[big_set, label=left:$T'$] (Tprime) at (0, 5) {};
\node[vertex] (V1) at (-1, 5.2) {};
\node[big_set, label=left:$T$] (T) at (0, 2) {};
\node[inner_set] (ANv) at (0, 2) {};
\node[left=0cm, font=\small] at (ANv.west) {$A'_{N,v}$};
\coordinate (ANv_left) at (ANv.west);
\coordinate (ANv_right) at (ANv.east);
\node[big_set, label=left:$K$] (K) at (0, -1) {};
\coordinate (divider_mid) at ($(K.west)!0.33!(K.east)$);
\coordinate (divider_top) at (divider_mid |- K.north);
\coordinate (divider_bottom) at (divider_mid |- K.south);
\draw[dashed, thick, red!80] 
    (divider_top) .. controls ($(divider_top) + (-0.3, 0)$) and ($(divider_bottom) + (-0.3, 0)$) .. 
    (divider_bottom);
\node[font=\small, color=red!80] at ($(divider_top)!0.5!(divider_bottom) + (1.5, -1)$) {$N$};
\node[vertex, label=left:$v$] (v) at ($(K.west)!0.5!(divider_mid) + (0, 0.2)$) {};
\coordinate (t_ellipse_center) at (1.2, -1);
\node[vertical_ellipse] (t_ellipse) at (t_ellipse_center) {};
\node[font=\small] at (t_ellipse.center) {$t$};
\coordinate (t_ellipse_top) at (t_ellipse.north);
\coordinate (t_ellipse_bottom) at (t_ellipse.south);
\fill[gray!50, opacity=0.5] (v.center) -- (ANv_left) -- (ANv_right) -- cycle;
\node[inner_set, draw=black, thick, fill=white] at (0, 2) {};
\node[vertex_red] (D) at (-0.5, 2.2) {};
\fill[green!40, opacity=0.6] (v.center) -- (t_ellipse_top) -- (t_ellipse_bottom) -- cycle;
\node[vertical_ellipse, draw=black, thick, fill=green!25] at (t_ellipse_center) {};
\node[font=\small] at (t_ellipse.center) {$t$};
\draw[green_edge] (D) -- (V1);
\draw[green_edge] (V1) -- (O);
\draw[green_edge, line width=1.5pt] (D) -- (v);
\draw[green_edge] (v) -- (t_ellipse_top);
\draw[green_edge] (v) -- (t_ellipse_bottom);
\node[above left, font=\small] at (dashed_line_right) {$N^{\ge 2}(v_0) \setminus (K \cup T) = \emptyset.$};
\node[vertex_red, label=left:$v_0$] (D) at (-0.5, 2.2) {};

\end{tikzpicture}
}
\caption*{(a)}
\end{minipage}
\hfill 
\begin{minipage}{0.48\textwidth}
\centering
\scalebox{0.8}{
\begin{tikzpicture}[
    big_set/.style={draw=black, thick, ellipse, minimum width=6cm, minimum height=3cm},
    vertex/.style={circle, fill=black, inner sep=1.8pt},
    vertex_blue/.style={circle, fill=blue!80, inner sep=1.8pt},
    vertex_red/.style={circle, fill=red!80, inner sep=1.8pt, minimum size=5pt},
    green_edge/.style={draw=green!70!black, thick},
    font=\small
]

\node[big_set, label={[font=\normalsize]above:$T''$}] (T_zagond) at (0, 3) {};
\coordinate (k1) at (-1, 2.8);
\coordinate (k2) at (1, 2.8);
\coordinate (k3) at (0, 3.8);
\draw[black!70, dashed, thick, rounded corners=10pt] 
    (-1.7, 2.5) -- (-1.7, 4) -- (1.7, 4) -- (1.7, 2.5) -- cycle;
\node[right=0.4cm, font=\small] at (1.7, 3.2) {$K''$};
\node[vertex, label=above left:$k_1$] (K1) at (k1) {};
\node[vertex, label=above right:$k_2$] (K2) at (k2) {};
\node[vertex] (K3) at (k3) {};
\fill[gray!25] (K1.center) -- (K2.center) -- (K3.center) -- cycle;
\draw[thick] (K1) -- (K2) -- (K3) -- (K1);
\node[vertex_blue, label=left:$v_1$] (V1) at (-2.2, 3) {};
\node[vertex_red, label=below:$v_0$] (V0) at (0, -0.5) {};
\draw[green_edge] (V0) to[bend left=5] (K1);
\draw[green_edge] (V0) to[bend right=5] (K2);
\draw[green_edge] (V0) -- (V1);
\draw[green_edge, line width=1.5pt] (V1) to[bend left=10] (K1);

\end{tikzpicture}
}
\caption*{(b)}
\end{minipage}

\caption{}
\label{fig:both-P4}
\end{figure}

\textit{Property 5.}

Suppose $G$ is \emph{$(s,t)$-bowtie}-free and satisfies the P-property.
$\omega(A'_{N,v}) < t$; since otherwise, a \emph{$(s,t)$-bowtie} would exist.

Given that the number of possible choices for $v$ and $N$ equals $\omega \binom{\omega-1}{t}$.
Then 
$\chi(T) \le C \omega \binom{\omega-1}{t}$.
In particular, if $t=2$, without the P-property, even for graphs with $\omega(G) \ge 3$, no edge can be found in $A'_{N,v}$, and edges may only exist between the sets of $A'_{N,v}$. Thus 
$\chi(T) \le \omega \binom{\omega-1}{2}$.

\begin{figure}[H]
\centering
\scalebox{0.7}{
\begin{tikzpicture}[
    big_set/.style={draw=black, thick, ellipse, minimum width=6cm, minimum height=2.8cm},
    dashed_set/.style={draw=black, thick, dashed, ellipse, minimum width=3.5cm, minimum height=1.5cm},
    omega_set/.style={draw=black, thick, ellipse, minimum width=2.5cm, minimum height=0.8cm, fill=green!25},
    vertical_ellipse/.style={draw=black, thick, ellipse, minimum width=0.9cm, minimum height=1.6cm, fill=green!30},
    vertex/.style={circle, fill=black, inner sep=1.8pt},
    green_edge/.style={draw=green!70!black, thick},
    font=\small
]

\node[big_set, label=left:$T$] (T) at (0, 3.5) {};

\node[dashed_set] (ANv) at (0, 3.5) {};
\node[left=0.1cm, font=\footnotesize, color=gray] at (ANv.west) {$A'_{N,v}$}; 

\node[omega_set] (omega_ANv) at (0, 3.5) {};
\node[font=\footnotesize] at (omega_ANv.center) {$\omega(A_{N,v})=t$};

\coordinate (omega_left) at (omega_ANv.west);
\coordinate (omega_right) at (omega_ANv.east);

\node[big_set, label=left:$K$] (K) at (0, -1) {};

\coordinate (divider_mid) at ($(K.west)!0.33!(K.east)$);
\coordinate (divider_top) at (divider_mid |- K.north);
\coordinate (divider_bottom) at (divider_mid |- K.south);

\draw[dashed, thick, red!80] 
    (divider_top) .. controls ($(divider_top) + (-0.3, 0)$) and ($(divider_bottom) + (-0.3, 0)$) .. 
    (divider_bottom);

\node[font=\small, color=red!80] at ($(divider_top)!0.5!(divider_bottom) + (1.5, -1)$) {$N$};

\node[vertex, label=left:$v$] (v) at ($(K.west)!0.5!(divider_mid) + (0, 0.2)$) {};

\coordinate (t_ellipse_center) at (1.2, -1);
\node[vertical_ellipse] (t_ellipse) at (t_ellipse_center) {};
\node[font=\small] at (t_ellipse.center) {$t$};

\coordinate (t_ellipse_top) at (t_ellipse.north);
\coordinate (t_ellipse_bottom) at (t_ellipse.south);

\fill[green!40, opacity=0.6] (v.center) -- (omega_left) -- (omega_right) -- cycle;

\fill[green!50, opacity=0.7] (v.center) -- (t_ellipse_top) -- (t_ellipse_bottom) -- cycle;

\draw[green_edge] (v) -- (t_ellipse_top);
\draw[green_edge] (v) -- (t_ellipse_bottom);
\draw[green_edge] (v) -- (omega_left);
\draw[green_edge] (v) -- (omega_right);

\node[vertical_ellipse, draw=black, thick, fill=green!30] at (t_ellipse_center) {};
\node[font=\small] at (t_ellipse.center) {$t$};

\node[dashed_set] (ANv_redraw) at (0, 3.5) {};
\node[omega_set, draw=black, thick, fill=green!25] at (0, 3.5) {};
\node[font=\footnotesize] at (omega_ANv.center) {$\omega(A'_{N,v})=t$};

\node[left=0.1cm, font=\footnotesize] at (ANv.west) {$A'_{N,v}$};

\end{tikzpicture}
}
\caption{$P5$}
\label{fig:P3_double_prime}
\end{figure}

\textit{Property 6.}

Suppose $G$ is $\{P_5, \emph{$(s,t)$-bowtie}\}$-free and satisfies the P-property. For every $v_0 \in S$ we have
$
N^{\ge 2}(v_0) \setminus (K \cup S) = \emptyset.
$
Otherwise, a $P_5$ would exist. Therefore, every vertex in $S$ must be adjacent to all vertices of $S'$. Then $\omega(S') < t $, since otherwise, a \emph{$(s,t)$-bowtie} would exist. Therefore 
$\chi(S') \le C$.
In the special case for $t=2$ and without the P-property, even for graphs with $\omega(G) \ge 3$, no edge exists between the vertices of $S'$, hence 
$\chi(S') \le 1$.
\medskip

\begin{figure}[H]
\centering
\scalebox{0.8}{
\begin{tikzpicture}[
    big_set/.style={draw=black, thick, ellipse, minimum width=6cm, minimum height=2.5cm},
    inner_set/.style={draw=black, thick, ellipse, minimum width=3cm, minimum height=1.2cm},
    component_ellipse/.style={draw=black, thick, ellipse, minimum width=2.5cm, minimum height=0.8cm, fill=green!20},
    vertical_ellipse/.style={draw=black, thick, ellipse, minimum width=0.8cm, minimum height=1.5cm, fill=green!25},
    vertex/.style={circle, fill=black, inner sep=1.8pt},
    vertex_red/.style={circle, fill=red!80, inner sep=1.8pt},
    green_edge/.style={draw=green!60!black, thick},
    dashed_line/.style={dashed, thick, black!60},
    rounded_rect/.style={draw=black, thick, dashed, rounded corners=6pt, minimum width=3.5cm, minimum height=1cm},
    font=\small
]

\coordinate (dashed_line_left) at (-4, 6.5);
\coordinate (dashed_line_right) at (4, 6.5);
\draw[dashed_line] (dashed_line_left) -- (dashed_line_right);

\node[big_set, label=left:$S'$] (Sprime) at (0, 5) {};

\node[rounded_rect] (component) at (0, 5) {};
\node[above=0.1cm, font=\small] at (component.north) {component B};

\node[component_ellipse] (inner_component) at (0, 5) {};
\node[font=\footnotesize] at (inner_component.center) {$\omega(B)=t$};

\path (inner_component.180) coordinate (omega_left_edge);   
\path (inner_component.0) coordinate (omega_right_edge);

\node[big_set, label=left:$S$] (S) at (0, 2) {};

\node[inner_set] (AM) at (0, 2) {};

\node[left=0cm, font=\small] at (AM.west) {$A_M$};

\node[big_set, label=left:$K$] (K) at (0, -1) {};

\coordinate (divider_mid) at ($(K.west)!0.33!(K.east)$);
\coordinate (divider_top) at (divider_mid |- K.north);
\coordinate (divider_bottom) at (divider_mid |- K.south);

\draw[dashed, thick, red!80] 
    (divider_top) .. controls ($(divider_top) + (-0.3, 0)$) and ($(divider_bottom) + (-0.3, 0)$) .. 
    (divider_bottom);

\node[font=\small, color=red!80] at ($(K.west)!0.5!(divider_mid) + (0, -0.5)$) {$M$};

\node[vertex] (v) at ($(K.west)!0.5!(divider_mid) + (0, 0.2)$) {};

\node[vertex_red, label=below right:$v_0$] (v0) at (-0.5, 2.2) {};

\coordinate (t_ellipse_center) at (1.2, -1);

\node[vertical_ellipse] (t_ellipse) at (t_ellipse_center) {};
\node[font=\small] at (t_ellipse.center) {$t$};

\coordinate (t_ellipse_top) at (t_ellipse.north);
\coordinate (t_ellipse_bottom) at (t_ellipse.south);

\fill[green!30, opacity=0.5] (v0) -- (omega_left_edge) -- (omega_right_edge) -- cycle;

\fill[gray!50, opacity=0.6] (v.center) -- (t_ellipse_top) -- (t_ellipse_bottom) -- cycle;

\fill[green!30, opacity=0.5] (v0) -- (t_ellipse_top) -- (t_ellipse_bottom) -- cycle;

\node[vertical_ellipse, draw=black, thick, fill=green!25] at (t_ellipse_center) {};
\node[font=\small] at (t_ellipse.center) {$t$};

\node[inner_set, draw=black, thick, fill=white] at (0, 2) {};

\node[component_ellipse, draw=black, thick, fill=green!20] at (0, 5) {};
\node[font=\footnotesize] at (inner_component.center) {$\omega(B)=t$};

\node[rounded_rect, draw=black, thick, dashed] at (0, 5) {};

\draw[green_edge] (v0) -- (omega_left_edge);
\draw[green_edge] (v0) -- (omega_right_edge);

\draw[green_edge] (v) -- (t_ellipse_top);
\draw[green_edge] (v) -- (t_ellipse_bottom);

\draw[green!60!black, thick] (v0) -- (t_ellipse_top);
\draw[green!60!black, thick] (v0) -- (t_ellipse_bottom);

\node[above left, font=\small] at (dashed_line_right) {$N^{\ge 2}(v_0) \setminus (K \cup S) = \emptyset.$};
\node[vertex_red, label=below right:$v_0$] (D) at (-0.5, 2.2) {}; 
\end{tikzpicture}
}
\caption{$P6$}
\label{fig:P6}
\end{figure}

\textit{Property 7.}

$G$ is $\{P_5, \emph{$(s+1,t+1)$-dumbbell}\}$-free and satisfies the P-property, then $\omega(T') < t $. For every $v_0 \in T$ we have
$
N^{\ge 2}(v_0) \setminus (K \cup T) = \emptyset.
$
Otherwise, a $P_5$ would exist. then $\omega(T') < t $ Therefore 
$\chi(T') \le C$, since otherwise, a \emph{$(s+1,t+1)$-dumbbell} would exist.
In the special case for $s,t=2$ without the P-property, no edge can exist in $T'$, as otherwise a \emph{$(3,3)$-dumbbell} would be present for graphs with $\omega(G) \ge 3$. Hence 
$\chi(T') \le 1$.

\begin{figure}[H]
\centering
\scalebox{0.8}{
\begin{tikzpicture}[
    big_set/.style={draw=black, thick, ellipse, minimum width=6cm, minimum height=2.5cm},
    inner_set/.style={draw=black, thick, ellipse, minimum width=3cm, minimum height=1.2cm},
    component_ellipse/.style={draw=black, thick, ellipse, minimum width=2.5cm, minimum height=0.8cm, fill=green!20}, 
    vertical_ellipse/.style={draw=black, thick, ellipse, minimum width=0.8cm, minimum height=1.5cm, fill=green!25},
    vertex/.style={circle, fill=black, inner sep=1.8pt},
    vertex_red/.style={circle, fill=red!80, inner sep=1.8pt},
    green_edge/.style={draw=green!60!black, thick},
    dashed_line/.style={dashed, thick, black!60},
    rounded_rect/.style={draw=black, thick, dashed, rounded corners=6pt, minimum width=3.5cm, minimum height=1cm},
    font=\small
]

\coordinate (dashed_line_left) at (-4, 6.5);
\coordinate (dashed_line_right) at (4, 6.5);
\draw[dashed_line] (dashed_line_left) -- (dashed_line_right);

\node[big_set, label=left:$T'$] (Tprime) at (0, 5) {};

\node[rounded_rect] (component) at (0, 5) {};
\node[above=0.1cm, font=\small] at (component.north) {component A};

\node[component_ellipse] (inner_component) at (0, 5) {};
\node[font=\footnotesize] at (inner_component.center) {$\omega(A)=t$};

\path (inner_component.180) coordinate (omega_left_edge);    
\path (inner_component.0) coordinate (omega_right_edge);     

\node[big_set, label=left:$T$] (T) at (0, 2) {};

\node[inner_set] (ANv) at (0, 2) {};

\node[left=0cm, font=\small] at (ANv.west) {$A'_{N,v}$};

\coordinate (ANv_left) at (ANv.west);
\coordinate (ANv_right) at (ANv.east);

\node[big_set, label=left:$K$] (K) at (0, -1) {};

\coordinate (divider_mid) at ($(K.west)!0.33!(K.east)$);
\coordinate (divider_top) at (divider_mid |- K.north);
\coordinate (divider_bottom) at (divider_mid |- K.south);

\draw[dashed, thick, red!80] 
    (divider_top) .. controls ($(divider_top) + (-0.3, 0)$) and ($(divider_bottom) + (-0.3, 0)$) .. 
    (divider_bottom);

\node[font=\small, color=red!80] at ($(divider_top)!0.5!(divider_bottom) + (1.5, -1)$) {$N$};

\node[vertex, label=left:$v$] (v) at ($(K.west)!0.5!(divider_mid) + (0, 0.2)$) {};

\coordinate (t_ellipse_center) at (1.2, -1);

\node[vertical_ellipse] (t_ellipse) at (t_ellipse_center) {};
\node[font=\small] at (t_ellipse.center) {$t$};

\coordinate (t_ellipse_top) at (t_ellipse.north);
\coordinate (t_ellipse_bottom) at (t_ellipse.south);

\fill[gray!50, opacity=0.5] (v.center) -- (ANv_left) -- (ANv_right) -- cycle;

\fill[green!30, opacity=0.5] (D) -- (omega_left_edge) -- (omega_right_edge) -- cycle;

\fill[green!40, opacity=0.6] (v.center) -- (t_ellipse_top) -- (t_ellipse_bottom) -- cycle;

\node[vertical_ellipse, draw=black, thick, fill=green!25] at (t_ellipse_center) {};
\node[font=\small] at (t_ellipse.center) {$t$};

\node[inner_set, draw=black, thick, fill=white] at (0, 2) {};

\node[component_ellipse, draw=black, thick, fill=green!20] at (0, 5) {};
\node[font=\footnotesize] at (inner_component.center) {$\omega(A)=t$};

\node[rounded_rect, draw=black, thick, dashed] at (0, 5) {};

\draw[green_edge] (D) -- (omega_left_edge);
\draw[green_edge] (D) -- (omega_right_edge);

\draw[green_edge, line width=1.5pt] (D) -- (v);

\draw[green_edge] (v) -- (t_ellipse_top);
\draw[green_edge] (v) -- (t_ellipse_bottom);
\node[vertex_red, label=below right:$v_0$] (D) at (-0.5, 2.2) {};
\node[above left, font=\small] at (dashed_line_right) {$N^{\ge 2}(v_0) \setminus (K \cup T) = \emptyset.$};

\end{tikzpicture}
}
\caption{$P7$}
\label{fig:P4}
\end{figure}
\textit{Property 8.}

Suppose that $G$ is diamond-free.
As in the proof of Property 4 (Figure~\ref{fig:both-P4}b), for each $v \in K$, due to the diamond-free property, If $T''$ is a component of $A'_{N,v}$, then $|T''| \le \omega(G)$. so
$
\chi(A'_{N,v}) \le \omega(G),
$
and the number of sets $A'_{N,v}$ satisfies
$
\omega \binom{\omega-1}{t}.
$
Hence, we obtain
$
\chi(T) \le \omega^2 \binom{\omega-1}{t}.
$
\\

In this paper, we consider the graph $Y$ to be either the graph $F^1_t$ or $F^2_t$.\\

\begin{theorem}\label{thm1}
Let $t \ge 2$ be an integer. Suppose that $G$ is a $\{\text{diamond}, \text{hammer}(t)^+\}$-free
 graph. Then $G$ is $(t)$-strongly Pollyanna.
\end{theorem}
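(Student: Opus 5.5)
We will show that there is a function $f$ with $f(c,\cdot)$ polynomial for each fixed $c$ such that $\chi(G)\le f\bigl(\chi^{(t)}(G),\omega(G)\bigr)$ for every $\{\text{diamond},\mathrm{hammer}(t)^+\}$-free graph $G$. By the characterization of $n$-strong Pollyanna classes recalled in the introduction, this proves the theorem. Put $C=\chi^{(t)}(G)$, so $G$ has the P-property. We argue by induction on $\omega=\omega(G)$.

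If $\omega\le 2t-2$, then $\omega\le t$ would give $\chi(G)\le C$ directly. For $t<\omega\le 2t-2$ we would either reduce to the pineapple case via Theorem~\ref{thm:1.2}, or bound $\chi$ by a constant depending on $C$ and $t$; since $\omega$ is bounded here, any bound counts as polynomial. So the main case is $\omega\ge 2t-1$. Fix a maximum clique $K$ and split $V(G)$ into the five pieces
\[
K,\quad S,\quad T,\quad T',\quad R:=V(G)\setminus(K\cup S\cup T\cup T').
\]
We bound the chromatic number of each piece and add the bounds:
\begin{itemize}
\item $\chi(K)=\omega$.
\item $S=\varnothing$ by Property~1, since a diamond contains $F^1_2$. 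More directly, a vertex outside $K$ with at least two neighbours in $K$ and a non-neighbour in $K$ creates a diamond once $\omega\ge 3$.
\item $\chi(T)\le\omega^2\binom{\omega-1}{t}$ by Property~8, which uses only diamond-freeness.
\item $\chi(T')\le\omega$ by Property~4.
\end{itemize}

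It remains to bound $R$. First, $S=\varnothing$ gives $S'=\varnothing$. The argument at the start of Property~4 shows that no vertex in $N^{\ge 2}(v_0)\setminus(K\cup T)$ exists for $v_0\in T$. This has to be read as a statement about distance in the graph with $K$ removed: a vertex at distance $2$ from $v_0$ through $T'$ yields an induced $\mathrm{hammer}(t)^+$, using $v_0$, its neighbour $v\in K$, and the $t$ vertices of $N$. Hence every vertex of $R$ has no neighbour in $T'$, so $R$ is anticomplete to $K\cup T\cup T'$. Consequently $G[R]$ is a union of components of $G$ anticomplete to everything else. Choosing $G$ connected (a minimal counterexample), or applying induction componentwise, gives $R=\varnothing$. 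Summing the pieces,
\[
\chi(G)\le \omega+\omega^2\binom{\omega-1}{t}+\omega=O(\omega^{t+2}),
\]
a polynomial in $\omega$ of degree $t+2$ (plus the constant from the small-$\omega$ case).

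\textbf{Main obstacle.} The hardest step is verifying the structural claim in Property~4. We must check that a path $v_0$--$x$--$O$ with $x\in T'$ and $O\notin K\cup T$ really produces an induced $\mathrm{hammer}(t)^+$. This requires:
\begin{itemize}
\item choosing $v\in K$ adjacent to $v_0$ and $N\subseteq K$ of size $t$ anticomplete to $v_0$;
\item confirming that $x$ and $O$ have no neighbours in $N\cup\{v\}$, which holds because $x,O\notin N(K)$;
\item confirming that $v_0\nsim O$.
\end{itemize}
The path $O,x,v_0,v$ then has endpoint $v$ complete to $N$. The second point also needs $x$ to have no neighbour in $K$, which holds because $T'$ is defined outside $K\cup S\cup T=K\cup N(K)$.

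We would also double-check the completeness claim inside Property~4, namely that $T'$ splits into cliques. A cleaner route avoids it: vertices of $T'$ whose $T$-neighbours are common form, by diamond-freeness, components that are cliques. If that fails, we fall back on $\omega(T')$ and the P-property together with Ramsey-type bounds from Proposition~\ref{prop:Erdos1935}.
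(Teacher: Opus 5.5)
Your route is the paper's own proof. You take the same decomposition $K\cup T\cup T'$, get $S=\varnothing$ from diamond-freeness, bound $\chi(T)\le\omega^2\binom{\omega-1}{t}$ by Property~8 and $\chi(T')\le\omega$ by Property~4, and end with the same bound $2\omega+\omega^2\binom{\omega-1}{t}$. Your treatment of the leftover set $R$ is an improvement, since the paper silently skips it. It is correct: $R$ is anticomplete to $K\cup T\cup T'$, so it is empty for connected $G$. Only the ``through $T'$'' form of the hammer lemma is true, not your ``distance in $G-K$'' gloss. With $v_0,v_0'\sim k_1$, $v_0\sim v_0'$ and $x$ pendant at $v_0'$, the vertex $x$ is at distance $2$ from $v_0$ in $G-K$, yet no hammer appears.

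Two steps need repair.

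\textbf{The range $t<\omega\le 2t-2$.} As written, this range is not actually proved. The pineapple reduction cannot work. Every $(t',k)$-pineapple is diamond-free and has no induced $P_4$, hence is $\mathrm{hammer}(t)^+$-free, so your class is not contained in any pineapple-free class. The alternative, ``bound $\chi$ by a constant'', is asserted without an argument. The threshold $2t-1$ is in fact unnecessary. In a diamond-free graph with $\omega\ge 3$, every vertex of $N(K)$ has exactly one neighbour in $K$. So once $\omega\ge t+1$, you get $N(K)=T$, and each $v_0\in T$ has a $t$-set $N\subseteq K\setminus\{v\}$ of non-neighbours. Your whole argument therefore runs for all $\omega\ge t+1$, which is the paper's threshold, and $\omega\le t$ gives $\chi\le C$.

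\textbf{Property~4.} Your doubt here is justified, but your fallback does not help. The P-property says nothing about $T'$ when its cliques can have size close to $\omega$. The paper's justification, that every vertex of $T$ is adjacent to all of $T'$, is false. Take $K=\{k_1,\dots,k_\omega\}$ and add $v_0\sim k_1$, $v_0'\sim k_2$, and $x\sim v_0'$ only. This graph is diamond-free and $\mathrm{hammer}(t)^+$-free, yet $v_0\nsim x$. The conclusion of Property~4 is nevertheless true, and it follows from your own lemma applied with $O\in T'$ instead of $O\in R$:
\begin{enumerate}
\item Let $x\in T'$, $v_0\in N_T(x)$, and $y\in N_{T'}(x)$. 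If $y\nsim v_0$, then $y,x,v_0,v$ together with $N$ induce $\mathrm{hammer}(t)^+$, because $x,y$ have no neighbours in $K$. So $y\sim v_0$.
\item Now $v_0\in N_T(y)$, so the step repeats. By induction along paths in $G[T']$, $v_0$ is complete to the whole component $Q$ of $G[T']$ containing $x$.
\item $G[N(v_0)]$ has no induced $P_3$, since together with $v_0$ it would form a diamond. Hence $Q$ is a clique of size at most $\omega-1$.
\end{enumerate}
This gives $\chi(T')\le\omega-1$ and completes your argument.
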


\begin{proof}
Let $F$ be a hereditary class of graphs and let $C$ be a positive integer
such that $\chi(G) \le C$ whenever $G \in F$ and $\omega(G) \le t$.
Let $\mathcal{G}$ be the class of $\{\text{diamond}, \text{hammer}(t)^+\}$-free graphs and let $G \in F \cap \mathcal{G}$.
We may assume that $\omega(G)\ge t+1$, because otherwise $\chi(G)\le C$.
By Property 1, let $S = \emptyset$.
 According to Property 8, we have
\[
\chi(T) \le \omega^2 \binom{\omega-1}{t}.
\]

Also, by Property 4,
\[
\chi(T') \le \omega(G).
\]

Therefore,
\[
\chi(G) \le 2\omega(G) + \omega^2 \binom{\omega-1}{t}+C.
\]
\end{proof}

\newpage
\begin{theorem}\label{thm2}
Let $s,t,k\geq2$ be positive integers. Suppose that $G$ is a $\{Y,(s,t)\text{-bowtie},(k,t)\text{-lollipop}\}$-free graph. Then $G$ is $(2t-2)$-strongly Pollyanna.
\end{theorem}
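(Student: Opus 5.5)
We follow the template of the proof of Theorem~\ref{thm1}. Let $F$ be a hereditary class and $C$ a constant with $\chi(H)\le C$ for every $H\in F$ with $\omega(H)\le 2t-2$, and let $G\in F$ be $\{Y,(s,t)\text{-bowtie},(k,t)\text{-lollipop}\}$-free. We may assume $\omega=\omega(G)\ge 2t-1$, since otherwise $\chi(G)\le C$. Fix a maximum clique $K$ and decompose $V(G)=K\cup S\cup T\cup S'\cup T'\cup R$, where $R=V(G)\setminus(K\cup S\cup T\cup S'\cup T')$ consists of the vertices at distance at least $2$ from $K$ and with no neighbour in $S\cup T$. The target is a bound
\[
\chi(G)\le \chi(K)+\chi(S)+\chi(T)+\chi(S')+\chi(T')+\chi(R),
\]
in which each term is polynomial in $\omega$ for fixed $C,s,t,k$. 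The term $\chi(R)$ is handled by induction on $|V(G)|$ (or by a layering argument), as in the pineapple proof of \cite{Chudnovsky2026reuniting}.

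\textbf{Bounding $S$, $T$ and $T'$.}
\begin{itemize}
\item For $S$: if $Y=F^1_t$, Property 1 gives $S=\emptyset$. If $Y=F^2_t$, Property 2 gives $|S|\le\sum_{i=1}^{t-1}\omega\binom{\omega}{i}$, which is polynomial of degree $t$ in $\omega$, so $\chi(S)$ is at most this.
\item For $T$: Property 5 applies. The $(s,t)$-bowtie-freeness gives $\omega(A'_{N,v})<t\le 2t-2$, so $\chi(A'_{N,v})\le C$, and hence $\chi(T)\le C\omega\binom{\omega-1}{t}$.
\item For $T'$: Property 3 (lollipop-freeness) gives that every $v_0\in T$ has fewer than $R(\omega-1,k)$ neighbours in $T'$. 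This is polynomial in $\omega$ of degree $k-1$ by Proposition~\ref{prop:Erdos1935}. Together with $|T|$-type counting, this bounds $T'$ after splitting $T'$ according to the sets $A'_{N,v}$ that its vertices attach to.
\end{itemize}

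\textbf{Bounding $T'$ more carefully.} Each vertex of $T'$ has a neighbour in some $A'_{N,v}$. We group $T'$ by a choice of such a $(N,v)$ pair, giving at most $\omega\binom{\omega-1}{t}$ classes, and colour each class separately. Within a class we need $\chi\le \mathrm{poly}(\omega)$. The plan is to show that $\omega$ restricted to the relevant neighbourhoods is at most $2t-2$, so that the constant $C$ applies.

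\textbf{Bounding $S'$.} Each $u\in S$ has fewer than $t$ non-neighbours in $K$, so $N(u)\cap K$ together with $u$ is a large clique. We use the $(s,t)$-bowtie: if $N(u)\cap S'$ contained a clique $Q$ of size $s$, then $u$ with $Q$ and $t$ vertices of $N(u)\cap K$ would form an $(s,t)$-bowtie, since $Q$ is anticomplete to $K$. Hence $\omega(N(u)\cap S')<s\le t$ and $\chi(N(u)\cap S')\le C$. If $S'$ is a union of at most $|S|$ such neighbourhoods, then $\chi(S')\le C|S|$, which is polynomial (and $\chi(S')=0$ when $S=\emptyset$).

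\textbf{Main obstacle.} The step I expect to be hardest is $T'$, and beyond it the remainder $R$. Property 3 controls only degrees from $T$ into $T'$, not the chromatic number of $T'$ itself. I would first try to show $\omega(N(v_0)\cap T')<t$ via the bowtie centred at $v_0$, with the second clique taken from $N(v_0)\cap K$; this needs $|N(v_0)\cap K|\ge t$, which holds because $\omega\ge 2t-1$ and we may use $v$ together with vertices outside $N$. This would give $\chi(T')\le C|T|$, but $|T|$ is not bounded. So I would instead use the lollipop bound to show that $T'$ has bounded degree into $T$-classes, and apply a greedy degree argument.

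For $R$, I would iterate the decomposition: remove $K\cup N(K)$ and recurse on the remaining components. Since the colour bound of the remainder does not accumulate across distinct distance layers, we can alternate two palettes between consecutive layers. This last point needs care, and I expect it to require verification.
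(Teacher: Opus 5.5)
There is a genuine gap: your scheme never closes. An additive bound $\chi(G)\le\chi(K)+\chi(S)+\chi(T)+\chi(S')+\chi(T')+\chi(R)$, with $\chi(R)$ supplied by induction on $|V(G)|$, only gives $\chi(G)\le g(\omega)+\mathrm{poly}(\omega)$. That exceeds $g(\omega)$, and $R$ need not have smaller clique number, so the induction does not go through.

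The two-palette fallback does not rescue it. The pieces produced by recursing inside $R$ are not distance layers of $G$: $R=N^{\ge 3}(K)$, so a piece found later in the recursion can still be adjacent to $S'\cup T'$ of the first piece. In any case, every piece would need its own polynomial bound on $\chi(T')$, which is exactly what you could not obtain.

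Your attempt at that bound also fails at a concrete point. A bowtie centred at $v_0\in T$ needs $|N(v_0)\cap K|\ge t$. Membership in $T$ only guarantees at least $t$ \emph{non}-neighbours in $K$, and $v_0$ may have a single neighbour $v$ in $K$. So $\omega\ge 2t-1$ gives nothing here.

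The missing idea is that $\chi(T')$ never has to be bounded, and neither do $\chi(S)$, $\chi(S')$ or $\chi(R)$. The paper's argument runs as follows.

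\begin{enumerate}
\item It takes $X=K\cup T$. By Property 5, $\chi(G[X])\le m(\omega)=\omega+C\omega\binom{\omega-1}{t}$.
\item Every $x\in X$ has fewer than $\alpha=R(\omega-1,k)+\sum_{i=1}^{t-1}\omega\binom{\omega}{i}$ neighbours outside $X$. For $x\in K$ these lie in $S$, whose size is bounded by Properties 1--2. For $x\in T$ they lie in $S\cup T'$, and the $T'$-part is bounded by the lollipop condition (Property 3).
\item It colours $G-X$ by induction with $g(\omega)=\alpha\, m(\omega)$ colours and fixes a proper colouring $c_2$ of $G[X]$ with $m(\omega)$ colours.
\item Each $x\in X$ gets a colour in the block $\{\alpha(c_2(x)-1)+1,\dots,\alpha c_2(x)\}$ that is not used on its outside neighbours.
\end{enumerate}

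The cost is multiplicative rather than additive, so the induction closes with the same $g$. The sets $S$, $S'$, $T'$ and $R$ are all simply absorbed into $G-X$.

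Your remark about a ``greedy degree argument'' points toward this, but in the wrong direction. Property 3 bounds the degree of $T$-vertices into $T'$. That lets you extend a colouring of everything else to $T$; it does not let you colour $T'$ itself.
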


\begin{proof}
Let $F$ be a hereditary class of graphs and let $C$ be a positive integer
such that $\chi(G) \le C$ whenever $G \in F$ and $\omega(G) \le 2t-2$.
Let $\mathcal{G}$ be the class of $\{Y,(s,t)\text{-bowtie},(k,t)\text{-lollipop}\}$-free graphs and let $G \in F \cap \mathcal{G}$.

Define
\[
m(x)=x+Cx \binom{x-1}{t}
\]
and
\[
g(x)=m(x)\Big( R(x-1,k)+\sum_{i=1}^{t-1} x\binom{x}{i} \Big).
\]
According to Property~5, we have
\[
\chi(T) \le C\,\omega \binom{\omega-1}{t}.
\]
Let $\omega$ be a positive integer.
We claim that if $\omega(G)\le \omega$, then $\chi(G)\le g(\omega)$.
We proceed by induction on $|V(G)|$.
We may assume that $\omega(G)\ge 2t-1$, because otherwise $\chi(G)\le C\le g(\omega)$.
Furthermore,
\[
\chi(G[K \cup T]) \le C\omega \binom{\omega-1}{t} + \omega.
\]
From Properties 1, 2 and 3, we have
$
|S| \le \sum_{i=1}^{t-1} \omega(G) \binom{\omega(G)}{i}, |N_{T'}(v_0)| < R(\omega-1, k)$, for each vertex $v_0\in T$.
Let
\[
\alpha = R(\omega-1,k)+ \sum_{i=1}^{t-1} \omega\binom{\omega}{i}.
\]

Then every vertex $v \in K \cup T$ has fewer than $\alpha$ neighbors in $V(G)\setminus (K \cup T)$. Let
$
c_1 : V(G \setminus (K \cup T)) \rightarrow \{1,2,\dots, g(\omega)\}
$
be a coloring obtained by the induction hypothesis. There exists a coloring
$
c_2 : K \cup T \rightarrow \{1,2,\dots,m(\omega)\}
$
of $G[K \cup T]$.

Define a coloring
$
c : V(G) \rightarrow \{1,2,\dots, g(\omega)\}
$
as follows. For $v \in V(G \setminus (K \cup T))$, let $c(v) = c_1(v)$.

Since every $v \in K \cup T$ has fewer than $\alpha$ neighbors in $V(G)\setminus(K \cup T)$,
there exists a color
$
c(v) \in \{\alpha(c_2(v)-1)+1,\dots,\alpha c_2(v)\}
$
that does not appear in $N(v)\setminus T$.

Since $c_2$ is a proper coloring of $G[K \cup T]$, it follows that $c$ is a proper coloring
of $G$ using at most $\max(\alpha m(\omega),g(\omega))$ colors. Thus, the proof is complete by Proposition~\ref{prop:Erdos1935}.
\end{proof}

\begin{theorem}\label{thm3}
Let $s,t\geq 2$ be positive integers. Suppose that $G$ is a $\{(s,t)\text{-bowtie},P_5,(s+1,t+1)\text{-dumbbell}\}$-free graph. 
Then $G$ is $(2t-2)$\text{-strongly Pollyanna}.
\end{theorem}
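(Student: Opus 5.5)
We follow the template of Theorem~\ref{thm1} and Theorem~\ref{thm2}. We fix a hereditary class $F$ and a constant $C$ such that $\chi(H)\le C$ whenever $H\in F$ and $\omega(H)\le 2t-2$. We then take $G\in F$ that is $\{(s,t)\text{-bowtie},P_5,(s+1,t+1)\text{-dumbbell}\}$-free. If $\omega(G)\le 2t-2$ then $\chi(G)\le C$ and we are done, so we may assume $\omega(G)\ge 2t-1$. We choose a maximum clique $K$ and use the decomposition $N(K)=S\cup T$ together with the sets $S'$ and $T'$. Since $\omega\ge 2t-1$ and $G$ satisfies the P-property with constant $C$ (note $t\le 2t-2$), all the Properties 5--7 apply.

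The first step is to bound the chromatic number of each layer near $K$.
\begin{itemize}
\item By Property~5, $\chi(T)\le C\omega\binom{\omega-1}{t}$.
\item By Property~7, $\chi(T')\le C$.
\item By Property~6, $\chi(S')\le C$.
\end{itemize}
We still need a bound on $\chi(S)$. Recall $S=\bigcup_{1\le|M|<t}A_M$, and each vertex of $A_M$ is complete to $K\setminus M$, which has size at least $\omega-t+1\ge t$. So if $\omega(A_M)\ge t$, then picking a vertex $u\in K\setminus M$ together with $t$ of its other neighbours in $K\setminus M$ (or $s\le t$ of them) would give an $(s,t)$-bowtie. The catch is that such a clique in $A_M$ is complete to $K\setminus M$, so it is not anticomplete to the second clique. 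Instead we use that $\omega(A_M)\le |M|<t$: a clique $Q\subseteq A_M$ together with $K\setminus M$ forms a clique of size $|Q|+\omega-|M|$, and this cannot exceed $\omega$. Hence $\chi(A_M)\le C$, and summing over the $M$ gives
\[
\chi(S)\le C\sum_{i=1}^{t-1}\binom{\omega}{i}.
\]

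The second step handles what lies beyond these layers. By the $P_5$-freeness arguments in Properties~6 and~7, every vertex of $S$ has no vertex at distance $\ge 2$ outside $K\cup S$, and similarly for $T$. We want to conclude that $N^{\ge 2}(K)=S'\cup T'$ and $N^{\ge3}(K)=\emptyset$. Suppose $x\in N^3(K)$, with a path $x-y-z-k$ where $k\in K$. Then $x$ is at distance $2$ from $z\in S\cup T$ and lies outside $K\cup S\cup T$, which contradicts the property just quoted. Here we must check that distance is measured in $G$, so that $N^{\ge2}(v_0)$ really contains $x$; this is the point where I expect care is needed. The paper's Property~6/7 statements are slightly loose, so I would reprove directly: a shortest path $x,y,z,k,k'$ with $k'\in K$ a non-neighbour of $z$ gives an induced $P_5$ unless $z$ is complete to $K$, which is impossible by maximality of $K$. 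So $V(G)=K\cup S\cup T\cup S'\cup T'$.

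Finally we colour each part with disjoint palettes and obtain
\[
\chi(G)\le \omega+C\sum_{i=1}^{t-1}\binom{\omega}{i}+C\omega\binom{\omega-1}{t}+2C.
\]
For fixed $C$ this is a polynomial in $\omega$. By the characterisation of $n$-strong Pollyanna stated before Theorem~\ref{thm:1.2}, this shows the class is $(2t-2)$-strongly Pollyanna, and no induction is required. The main obstacle is the step $N^{\ge3}(K)=\emptyset$, namely making the induced-$P_5$ argument rigorous. That argument needs the endpoint $k'\in K$ non-adjacent to $z$ and the chord checks: $x$ and $y$ are non-adjacent to $K$ by the distance conditions, and $y$ is non-adjacent to $k'$ for the same reason. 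A secondary check is the bound $\omega(T')<t$ from Property~7, which I would take as given.
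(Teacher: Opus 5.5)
Your proposal is correct and is essentially the paper's own proof: the same partition $K\cup S\cup T\cup S'\cup T'$, the same bounds from maximality of $K$ and Properties~5--7, and the same final polynomial. Your explicit induced-$P_5$ argument for $N^{\ge 3}(K)=\emptyset$, applied in the component containing $K$, fills in a step the paper leaves implicit.

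For the input you take on faith, $\omega(S'),\omega(T')<t$, do not rely on the paper's literal claim that $S$ is complete to $S'$ (and $T$ to $T'$). That claim fails, for example, for two nonadjacent vertices of $A_{\{m\}}$ where one of them has a pendant neighbour. Instead, reuse your own pattern $q'$--$q$--$y$--$k$--$k'$, where $q\sim q'$ lie in $N^2(K)$, $y\in N(K)$, $k\in K\cap N(y)$ and $k'\in K\setminus N(y)$. It shows that each $y\in N(K)$ is complete or anticomplete to every component of $G[N^2(K)]$. Hence a $t$-clique in $S'$ together with a neighbour in $S$ gives an $(s,t)$-bowtie, and a $t$-clique in $T'$ together with a neighbour in $T$ gives an $(s+1,t+1)$-dumbbell, exactly as intended.
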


\begin{proof}

Let $F$ be a hereditary class of graphs and let $C$ be a positive integer
such that $\chi(G) \le C$ whenever $G \in F$ and $\omega(G) \le 2t-2$.
Let $\mathcal{G}$ be the class of $\{(s,t)\text{-bowtie},P_5,(s+1,t+1)\text{-dumbbell}\}$-free graphs and let $G \in F \cap \mathcal{G}$.
We may assume that $\omega(G) \ge 2t-1$, because otherwise $\chi(G) \le C$.\\
$
\chi(S) \le \sum_{i=1}^{t-1} C \binom{\omega(G)}{i}, 
$
Since $\omega(A_M) \le t$ (otherwise it would contradict the maximality of $K$).
By Properties 5, 6, and 7 we have
\[
\chi(G) \le C \Biggl( 2 + \sum_{i=1}^{t-1}  \binom{\omega(G)}{i} + \omega(G) \binom{\omega(G)-1}{t} \Biggr)+ \omega(G).
\]
\end{proof}

\begin{theorem}\label{thm4}
Suppose that $G$ is a $\{(2,2)\text{-bowtie}, P_5, (3,3)\text{-dumbbell}\}$-free graph with $\omega(G) \ge 3$. Then
\[
\chi(G) \le 2\omega(G) + \omega(G) \binom{\omega(G)}{2} + 2.
\]
\end{theorem}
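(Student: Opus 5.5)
We will follow the structure of Theorem~\ref{thm3}, specialized to $s=t=2$. There is no P-property available here, so each piece must be colored directly. Fix a maximum clique $K$ with $|K|=\omega(G)\ge 3$ and form the sets $S$, $T$, $S'$, $T'$ with $t=2$.

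With $t=2$, $S$ consists of the neighbours of $K$ with exactly one non-neighbour in $K$, so $S=\bigcup_{x\in K}A_{\{x\}}$. We then partition
\[
V(G)=K\cup S\cup T\cup S'\cup T'\cup R,
\]
where $R=N^{\ge 2}(K)\setminus(S'\cup T')$.

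The first step is to show that $R=\emptyset$. Take $r\in R$ and a shortest path from $r$ to $K$. It passes through $N^2(K)$ and then $N(K)=S\cup T$, so its vertex in $N^2(K)$ lies in $S'\cup T'$. Hence $r$ is at distance at least $2$ from some $v_0\in S\cup T$ whose neighbour lies in $S'\cup T'$. Properties~6 and~7 state that $N^{\ge 2}(v_0)\setminus(K\cup S)$, respectively $N^{\ge 2}(v_0)\setminus(K\cup T)$, is empty; the reason is that such a path, extended by $v_0$ and a neighbour and a non-neighbour in $K$, gives an induced $P_5$. So $R=\emptyset$. The same $P_5$ argument applies to any vertex of $N^{3}(K)$, so $V(G)=K\cup S\cup T\cup S'\cup T'$. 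If $S'$ and $T'$ overlap, we color the overlap with either palette, since both palettes below are valid on it.

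The second step colors each part.

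\textbf{$K$:} it uses $\omega$ colors.

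\textbf{$S$:} for $x\in K$, the set $A_{\{x\}}$ is a clique. Two non-adjacent vertices $a,b\in A_{\{x\}}$ together with two vertices of $K\setminus\{x\}$ would give a $(2,2)$-bowtie centred at a vertex of $K\setminus\{x\}$. Since $|K|\ge 3$, we choose the two bowtie vertices inside $K\setminus\{x\}$ and use a third vertex as the centre; this needs care and is the point to double-check. Each $A_{\{x\}}$ is then a clique of size at most $\omega$, so coloring the $\omega$ sets with disjoint palettes gives $\chi(S)\le\omega\cdot\omega$. A sharper route is to use that $A_{\{x\}}\cup\{x\}$ is anticomplete appropriately, aiming for a total of $\omega$ colors on $S$. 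At worst the bound $\omega\binom{\omega}{2}$ still absorbs this.

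\textbf{$T$:} Property~5 with $t=2$ gives $\chi(T)\le\omega\binom{\omega-1}{2}\le\omega\binom{\omega}{2}$.

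\textbf{$S'$ and $T'$:} Property~6 gives $\chi(S')\le 1$, and Property~7 gives $\chi(T')\le 1$.

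Summing these bounds gives
\[
\chi(G)\le \omega+\chi(S)+\omega\binom{\omega}{2}+2,
\]
and the claimed inequality follows once $\chi(S)\le\omega$.

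The main obstacle is the bound $\chi(S)\le\omega$. We would get it by showing that vertices in different sets $A_{\{x\}}$ and $A_{\{y\}}$ are essentially forced to be adjacent or non-adjacent in a structured way. First we would try to prove that $S\cup\{\text{all of }K\}$ induces a graph of clique number at most $\omega$ that admits the coloring $c(a)=c(x)$ for $a\in A_{\{x\}}$. This is proper within each $A_{\{x\}}$ only if $|A_{\{x\}}|\le 1$. So we would instead argue that two adjacent vertices in $A_{\{x\}}$ together with $K\setminus\{x\}$ form a clique of size $\omega+1$ when $|K\setminus\{x\}|=\omega-1$. That contradiction gives $|A_{\{x\}}|\le 1$, and then $\chi(S)\le\omega$ by recoloring each $a\in A_{\{x\}}$ with the color of $x$ within a fresh palette of $\omega$ colors. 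Combined with the bowtie argument, this closes the count.
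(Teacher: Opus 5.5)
Your route is the paper's: the same decomposition $K,S,T,S',T'$ with $t=2$, Property~5 for $T$, Properties~6 and~7 for $S'$ and $T'$, and maximality of $K$ for $S$. Your explicit $P_5$ argument $r\,u\,v_0\,k\,k'$ showing $N^{\ge 3}(K)=\emptyset$ is correct. The step that is genuinely wrong is your treatment of $S$. The claim that two non-adjacent vertices $a,b\in A_{\{x\}}$ produce a $(2,2)$-bowtie is false. Take $K=\{x,y,z\}$ and add non-adjacent vertices $a,b$ complete to $\{y,z\}$ and anticomplete to $x$; this graph is $F^2_2$. Only $y$ and $z$ have degree $4$, and $N(y)=\{x,z,a,b\}$ induces a star centred at $z$, so there is no bowtie. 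Having five vertices and a triangle, the graph is trivially $P_5$-free and $(3,3)$-dumbbell-free. It has $\omega=3$, yet $|A_{\{x\}}|=2$. So neither ``$A_{\{x\}}$ is a clique'' nor ``$|A_{\{x\}}|\le 1$'' holds. Your remark that giving each $a\in A_{\{x\}}$ the colour of $x$ is proper only if $|A_{\{x\}}|\le 1$ is also a misstatement: it is proper exactly when $A_{\{x\}}$ is stable. Your own maximality argument (an edge of $A_{\{x\}}$ together with $K\setminus\{x\}$ is a clique of size $\omega+1$) already gives stability, which is exactly what the paper uses. Delete the bowtie step and $\chi(S)\le\omega$ follows. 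In fact, since each $a\in A_{\{x\}}$ misses only $x$ in $K$, you can reuse $K$'s own palette and get $\chi(K\cup S)\le\omega$.

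A secondary point: you quote Properties~6 and~7 as asserting $N^{\ge2}(v_0)\setminus(K\cup S)=\emptyset$, i.e.\ every vertex of $S$ is adjacent to every vertex of $S'$. Read literally, that is false. In the example above, add a vertex $u$ adjacent only to $b$: the graph stays in the class, $u\in S'$, and $u\nsim a$. What you actually need is that $S'\cup T'=N^2(K)$ is stable, and this is easy to prove directly. Let $u_1u_2$ be an edge in $N^2(K)$, let $v\in N(K)$ be a neighbour of $u_1$, and take $k\in K\cap N(v)$ and $k'\in K\setminus N(v)$. If $v\nsim u_2$, then $u_2\,u_1\,v\,k\,k'$ is an induced $P_5$. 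If $v\sim u_2$ and $v$ has two neighbours $k_1,k_2\in K$, then $v$ centres an induced $(2,2)$-bowtie on $u_1u_2$ and $k_1k_2$. If instead $k$ is the only neighbour of $v$ in $K$, then $\{u_1,u_2,v\}$ and a triangle of $K$ through $k$ (which exists since $\omega\ge 3$) form an induced $(3,3)$-dumbbell. This even gives a single colour for $S'\cup T'$ rather than two.
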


\begin{proof}
Let $K$ be a maximum clique of size $\omega(G)$. There are $\omega(G)$ ways to choose a singleton set from $K$; let $M$ represent each such choice. 
Let $A_M$ be the set of vertices that are non-adjacent to exactly one vertex corresponding to $M$. We denote the union of these $A_M$ sets as $S$.
Furthermore, let $N$ be a set of two vertices from $K$, and let $A'_{N,v}$ be the set of vertices non-adjacent to both vertices in $N$(and adjacent to v). We denote the union of these $A'_{N,v}$ sets as $T$. Hence, $N(K) = S \cup T$. $S' \text{ is the vertices outside } K \cup S \cup T \text{ with a neighbor in } S$. $T' \text{ is the vertices outside } K \cup S \cup T \text{ with a neighbor in } T$.

Since $G$ is $P_5$-free, for any $v \in T \cup S$, we have $N^{\ge 2}(v) \setminus (T \cup S) = \emptyset$.

By Property~5,
\[
\chi(T) \le \omega(G) \binom{\omega(G)-1}{2}.
\]

By Property~6,
\[
\chi(S') \le 1.
\]

By Property~7,
\[
\chi(T') \le 1.
\]

Moreover, $A_M$ cannot contain a clique of size two, since this would contradict the maximality of $K$. The number of possible sets $A_M$ is equal to $\omega$.
Hence,
\[
\chi(S) \le \omega(G).
\]
\end{proof}

\subsection{Structure of Diamond-Free Graphs with Each Edge in at Least Two Triangles}

Let $G$ be a graph in which every edge lies in at least two triangles, and suppose that $G$ is diamond-free. For any edge $uv \in E(G)$, define
\[
K(uv) = \{u,v\} \cup \{w \mid w \sim u \text{ and } w \sim v\}.
\]
Then $K(uv)$ is a maximal clique of $G$.

Indeed, if there exist two distinct vertices $a,b \in K(uv)$ that are not adjacent, then the induced subgraph on $\{u,v,a,b\}$ forms a diamond, contradicting the assumption that $G$ is diamond-free. Therefore, $K(uv)$ is a complete graph and is maximal, and moreover, the cardinality of $K(uv)$ is at least $4$; that is, $|K(uv)| \geq 4$.

Moreover, the family of maximal cliques of $G$ forms a unique edge-disjoint partition of $E(G)$; that is, each edge of $G$ belongs to exactly one maximal clique. Otherwise, if two distinct maximal cliques shared an edge, then that edge together with two additional vertices from these cliques would induce a diamond, again contradicting the diamond-free property.
Hence, maximal cliques of $G$ may intersect in at most one vertex, but they do not share any edge.
\paragraph{Property D1 (Fan Structure at a Vertex).}
Let $A$ be a maximal clique of $G$ and let $v \in V(A)$. There may exist several other maximal cliques whose only common vertex with $A$ is $v$. This configuration is called a \emph{fan structure} at the vertex $v$, and the maximal cliques attached to $v$ are called the \emph{fan blades} (Figure~\ref{fig:ellipse-structures}a).

In this situation, no two distinct fan blades sharing the vertex \( v \) are adjacent to each other. 
Indeed, if there were an edge between two such blades, like \( ab \) where \( a \) belongs to the maximal complete subgraph \( A \) 
and \( b \) belongs to the maximal complete subgraph \( B \), then \( va \in K(vb) \), \( vb \in K(va) \). 
Hence \( a \) must be adjacent to all vertices of \( B \) and \( b \) must be adjacent to all vertices of \( A \), 
which contradicts their maximality.

\begin{theorem}\label{thm5}
Let $k \ge 1$ be an integer. Let $G$ be a graph in which every edge lies in at least two triangles.
\begin{enumerate}
\item[(a)] If $G$ is $\{\mathrm{diamond}, F(3,k)\}$-free, then $G$ is polynomially $\chi$-bounded.
\item[(b)] If $G$ is $\{\mathrm{diamond}, (4,4)\text{-dumbbell}\}$-free, then $G$ is polynomially $\chi$-bounded.
\end{enumerate}
\end{theorem}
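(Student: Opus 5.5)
We follow the clique-neighbourhood scheme used for Theorem~\ref{thm1}. The class is hereditary once we read the hypothesis as applying to $G$ itself. Since polynomial $\chi$-boundedness must hold for all induced subgraphs, the plan is to prove a bound $\chi(G)\le p(\omega(G))$ for every graph $G$ in the class. Induced subgraphs of $G$ are then handled as follows: they are still diamond-free, so by Property~D1 every maximal clique structure persists, and the argument only uses the diamond-free structure together with the forbidden graph. The real work is to control $\chi$ via degeneracy arguments based on the edge-disjoint clique partition.

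\textbf{Setup.} Fix a maximum clique $K$, $|K|=\omega\ge 4$. By Property~1 (case $t=2$, diamond-freeness), $S=\emptyset$, so every vertex of $N(K)$ has at most one neighbour in $K$. Indeed, two neighbours in $K$ together with the edge between them would place the vertex in $K(uv)=K$. Hence $N(K)$ is partitioned into sets $B_v$ ($v\in K$), where $B_v=N(v)\setminus K$. By Property~D1, $B_v$ is a disjoint union of the fan blades at $v$ minus $v$. Each blade is a clique of size $\ge 3$, and there are no edges between different blades at $v$, so $\chi(B_v)\le\omega-1$.

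\textbf{Part (a).} Since every blade contains a triangle, $F(3,k)$-freeness forces each vertex $v$ to lie in fewer than $k$ maximal cliques. Choosing one triangle from each of $k$ blades gives an induced $F(3,k)$, because blades at $v$ are mutually anticomplete. Thus every vertex has degree at most $(k-1)(\omega-1)$. Hence $G$ is $(k-1)(\omega-1)$-degenerate and
\[
\chi(G)\le (k-1)(\omega-1)+1,
\]
which is linear in $\omega$. This part is therefore essentially immediate and needs no induction.

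\textbf{Part (b).} We argue by induction on $|V(G)|$, as in the proof of Theorem~\ref{thm2}. The key claim is that a $(4,4)$-dumbbell-free graph with all maximal cliques of size $\ge 4$ has the following property. For any two maximal cliques $A,B$ that are joined by an edge $ab$ ($a\in A\setminus B$, $b\in B\setminus A$) but share no vertex, the set $A\cup B$ must contain extra edges; otherwise $A$ (4 vertices containing $a$) and $B$ (4 vertices containing $b$) induce a dumbbell. We intend to exploit this as follows. Take a vertex $u\in B_v$ lying in a blade $Q$ at $v$. Any other maximal clique $R\ni u$ meets $K$ in no vertex, since $u$ has only $v$ in $K$. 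Picking $r\in R\setminus\{u\}$ nonadjacent to $v$ and applying the dumbbell exclusion to $K$ and $R$ through the edge $vu$ forces edges from $R$ into $K\setminus\{v\}$, or makes $R$ share vertices with $Q$. Because cliques intersect in at most one vertex and each vertex of $R$ has at most one neighbour in $K$, this should bound the number of cliques through $u$ by roughly $\omega$. It should also show that $N^2(K)$ adjacent to $N(K)$ is small per vertex. We would then mimic Theorem~\ref{thm2}: color $G[K\cup N(K)]$ with at most $\omega\cdot(\omega-1)+\omega$ colours, show each vertex of $K\cup N(K)$ has at most $\mathrm{poly}(\omega)$ neighbours outside, colour the rest by induction, and combine with the blocking-palette trick to get a polynomial bound.

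\textbf{Main obstacle.} The hardest step is making the dumbbell argument in (b) rigorous. The difficulty is that the dumbbell must be induced, so the two $4$-subsets of $A$ and $B$ must be chosen to avoid all cross edges except $ab$. Since cross edges between two maximal cliques form a matching-like structure (two edges from one vertex of $B$ into $A$ would create a triangle lying in two cliques, i.e.\ a diamond), each vertex of $B$ has at most one neighbour in $A$. With this fact we will try to select four vertices in each clique avoiding the matched ones. This selection needs the cliques to have size larger than the number of cross edges, and the argument may require a case split when $\omega$ is small.
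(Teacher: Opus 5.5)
\textbf{Part (a).} Your argument is correct, and it is cleaner than the paper's. The paper deletes a maximum clique $K$, colours $G\setminus K$ by induction on $|V(G)|$, and re-inserts $K$ with the palette trick of Theorem~\ref{thm2}. It uses only that each $v\in K$ has fewer than $(\omega-1)(k-1)$ neighbours off $K$. This gives the quadratic bound $\omega(\omega-1)(k-1)$, and it applies the induction hypothesis to $G\setminus K$, which need not have every edge in two triangles.

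You observe instead that the same blade count bounds the degree of \emph{every} vertex. That gives $\chi(G)\le (k-1)(\omega-1)+1$ directly, with no induction. Your justification for induced subgraphs (``Property~D1 persists'') is not right, because D1 needs the two-triangle hypothesis in the subgraph itself. Use the ambient cliques instead: a vertex of an induced subgraph $H$ has at most $\omega(H)-1$ neighbours of $H$ inside each of its fewer than $k$ cliques of $G$.

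\textbf{Part (b).} This part has a genuine gap. The step that carries all the weight is a $\mathrm{poly}(\omega)$ bound on the number of cliques through $u\in N(K)$, and hence on outward degrees. You only assert it (``should bound \dots\ by roughly $\omega$''), and the reason you give does not yield it.

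When $|K|$ and $|R|$ are small, dumbbell-freeness forces only one extra cross edge between $R\setminus\{u\}$ and $K\setminus\{v\}$. A single vertex of $K\setminus\{v\}$ can supply that edge for every clique $R\ni u$ at once. So ``each vertex of $R$ has at most one neighbour in $K$'' puts no limit on the number of such $R$.

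The collinearity graph of the generalized quadrangle $GQ(3,9)$ illustrates this. It is diamond-free, every edge lies in exactly two triangles, and $\omega=4$. It is $(4,4)$-dumbbell-free, because any two disjoint lines are joined by a perfect matching. Yet every point lies on $10$ cliques, and each point of $K\setminus\{v\}$ is adjacent to a point on every line through $u$.

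Separately, the induction you borrow from Theorem~\ref{thm2} is applied to $G-(K\cup N(K))$. That graph is not in the class, so the induction hypothesis is unavailable there.

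\textbf{What your matching observation does prove.} It settles the large-clique case. If $|K|\ge 7$, any three vertices of $R\setminus\{u\}$ are adjacent to at most three vertices of $K\setminus\{v\}$. At least three vertices of $K\setminus\{v\}$ remain untouched, and together with $u,v$ they give an induced $(4,4)$-dumbbell. Hence no such $R$ exists. Every vertex of $N(K)$ then has its whole neighbourhood inside its blade, so the component of $K$ is $K$ with pendant cliques and is $\omega$-colourable.

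The real content of (b) is therefore a constant bound on $\chi$ for components all of whose maximal cliques have size at most $6$. Your proposal has no argument for that case.

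You also cannot take it from the paper. Its proof asserts that blades can hang at only one vertex of $K$ and concludes $\chi=\omega$, ignoring exactly the cross edges you flagged. $K_4\square K_4$ already refutes the first assertion. $GQ(3,9)$ refutes the second: it has $112$ points and no ovoid, so $\alpha\le 27$ and $\chi\ge 5>\omega$.
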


\begin{proof}[Proof of (a)]
Let $G$ be a $\{\mathrm{diamond}, F(3,k)\}$-free graph and let $\omega \geq \omega(G) \geq 4$. We claim that there exists a function $g$ such that
$
\chi(G) \leq g(\omega).
$
We proceed by induction on $|V(G)|$.

Let $K$ be a maximum clique of $G$ of size $\omega$. Define $m(x)=x$ and
\[
g(x) = x(x-1)(k-1).
\]

By the induction hypothesis, the induced subgraph $G \setminus K$ admits a proper coloring
$
C_1 : V(G)\setminus K \to \{1,2,\dots,g(\omega)\}.
$
Let
$
C_2 : K \to \{1,2,\dots,m(\omega)\}
$
be a proper coloring of the clique $K$.

For each vertex $v \in K$, we estimate the number of neighbors of $v$ outside $K$. By Property~D1 and the fact that $G$ is $F(3,k)$-free, the number of fan blades attached to $v$ outside $K$ is strictly less than $(k-1)$ (Figure~\ref{fig:ellipse-structures}a). Each fan blade can contribute at most $(\omega-1)$ neighbors of $v$. Define
\[
\alpha := (\omega-1)(k-1).
\]
Then for every $v \in K$ we have
$
|N(v) \setminus K| < \alpha.
$

Using the same argument as in the proof of Theorem 2.7, for each vertex in $K$ there exists a color set of
size $\alpha + 1$ that avoids conflicts with the coloring of $V(G) \setminus K$.

This completes the proof of~(a).
\end{proof}

\begin{proof}[Proof of (b)]
In this case, the structure of $G$ implies that two distinct vertices of a maximum clique $K$ cannot each be adjacent to distinct maximal cliques outside $K$, since this would induce a (4,4)-$dumbbell$ subgraph (Figure~\ref{fig:ellipse-structures}b). 

Therefore, the only possible configuration is that maximal cliques are attached to a single vertex of $K$ in the form of fan blades.This structure preserves the chromatic number, maintaining equality with the clique number. This completes the proof of~(b).
\end{proof}

\begin{figure}[H]
\centering

\begin{minipage}{0.48\textwidth}
\centering
\scalebox{0.9}{
\begin{tikzpicture}[
    ellipse_style/.style={draw=green!80!black, thick, ellipse, minimum width=4cm, minimum height=1.2cm},
    ellipse_vertical/.style={draw=green!80!black, thick, ellipse, minimum width=1cm, minimum height=3cm},
    ellipse_diagonal/.style={draw=green!80!black, thick, ellipse, minimum width=3cm, minimum height=1cm, rotate=45},
    ellipse_diagonal2/.style={draw=black, thick, ellipse, minimum width=3cm, minimum height=1cm, rotate=-45},
    vertex/.style={circle, fill=black, inner sep=1.5pt},
    font=\small
]

\node[ellipse_style] (A) at (0, 0) {};
\node[vertex, label=below:$v$] (v) at (-1.5, 0) {};
\node[ellipse_vertical] (B) at (-1.5, -1.2) {};
\node[ellipse_diagonal] (C) at (-2.2, -0.8) {};

\end{tikzpicture}
}
\caption*{(a)}
\end{minipage}%
\hspace*{\fill}
\begin{minipage}{0.48\textwidth}
\centering
\scalebox{0.9}{
\begin{tikzpicture}[
    ellipse_style/.style={draw=green!80!black, thick, ellipse, minimum width=4cm, minimum height=1.2cm},
    ellipse_vertical/.style={draw=green!80!black, thick, ellipse, minimum width=1cm, minimum height=3cm},
    vertex/.style={circle, fill=black, inner sep=1.5pt},
    font=\small
]

\node[ellipse_style] (A) at (0, 0) {};
\node[vertex, label=below:$v_1$] (v1) at (-1.5, 0) {};
\node[vertex, label=below:$v_2$] (v2) at (1.5, 0) {};
\node[ellipse_vertical] (B) at (-1.5, -1.2) {};
\node[ellipse_vertical] (C) at (1.5, -1.2) {};
\draw[green!80!black, thick] (v1) -- (v2);

\end{tikzpicture}
}
\caption*{(b)}
\end{minipage}

\caption{}
\label{fig:ellipse-structures}
\end{figure}

\end{document}